\theoremstyle{plain}
\numberwithin{equation}{section} \numberwithin{figure}{section}
\newtheorem{theorem}{Theorem}[section]
\newtheorem{lemma}[theorem]{Lemma}
\newtheorem{proposition}[theorem]{Proposition}
\newtheorem{definition}[theorem]{Definition}
\theoremstyle{definition}
\newtheorem{remark}[theorem]{Remark}
\numberwithin{equation}{section}
\def\diver{\mathop{\text{\normalfont div}}}
\newcommand{\R}{\mathbb{R}}
\newcommand{\N}{\mathbb{N}}  
\newcommand{\cd}{\rightharpoonup}
\begin{document}

\title[The H\'enon equation in Orlicz-Sobolev spaces]{The H\'enon equation in Orlicz-Sobolev spaces}

\author{Pablo Ochoa}

\address{P. Ochoa. \newline Universidad Nacional de Cuyo, Fac. de Ingenier\'ia. CONICET. Universidad J. A. Maza\\Parque Gral. San Mart\'in 5500\\
Mendoza, Argentina.}
\email{pablo.ochoa@ingenieria.uncuyo.edu.ar}

\author{Ariel Salort}
\address{A. Salort \newline Departamento de Matem\'aticas y Ciencia de Datos, Universidad San Pablo-CEU, CEU Universities, Urbanizaci\'on Montepr\'incipe, 28660 Boadilla del Monte, Madrid, Spain. }
\email{\tt ariel.salort@ceu.es}

\parskip 3pt
\subjclass[2020]{35A15, 35A01, 35B06, 35D30}
\keywords{H\'enon's equation, Symmetry of solutions, Pohozaev's identity, Variational Method }

\maketitle

\begin{abstract}In this paper, we consider the H\'enon problem in the setting of Orlicz-Sobolev spaces:
\begin{equation}\label{Henon eq abs}
\begin{cases}
-\Delta_g u= |x|^\alpha h( u) \quad \text{in }B\\
u>0 \quad \text{in }B\\
u= 0 \quad \text{on }\partial B\\
\end{cases}
\end{equation}where $B$ is the unit ball in $\mathbb{R}^n$,  $g=G'$, $h=H'$ are N-functions and the operator $-\Delta_g$ is the $g$-Laplacian. We show that the symmetric term $|x|^\alpha$, for $\alpha>0$, allows to have radial solutions even for supercritical $H$, generalizing results for the classical H\'enon equation. We also show that radial solutions are indeed bounded. Finally, we  state a Pohozaev's identity  in Orlicz-Sobolev spaces that we apply to get a range in $\alpha$ for which problem \eqref{Henon eq abs} has no bounded solutions.

\end{abstract}

\section{Introduction}
The classical H\'enon's equation \cite{H} with Dirichlet boundary conditions is given by
\begin{equation}\label{H eq}
\begin{cases}
-\Delta u= |x|^\alpha u^{q-1} \quad \text{in }B\\
u>0\quad \text{in }B\\
u=0 \quad \text{on }\partial B,
\end{cases}
\end{equation}where $B$ is the unit ball in $\mathbb{R}^n$, $q>1$ and $\alpha>0$. In \cite{H}, a related model, called spheric shell model, was used to investigate numerically the stability of spherical steady-state stellar systems with respect to spherical disturbances.

This equation has been the starting point of extensive research on existence and nonexistence results, symmetry of ground states, asymptotic behavior of solutions, and other related properties. The literature on the subject is now vast and it is impossible to give a comprehensive list of references; we refer to the seminal works \cite{AR,BW,  Nazarov,Ni,   NiSerrin,SerrinTang,   SWS} and the references therein.

In \cite{AR}, a similar problem to \eqref{H eq} was considered but with more general lower order terms $f(x, u)$. It was showed there that if the growth of $f$ is less than $|u|^{q-1}$, with $q<2^{*}:=2n/(n-2)$, then a solution exists. On the other hand, the Pohozaev's identity shows that no solution exist for $\alpha=0$,  $q\geq 2^{*}$, and for a star-shaped domain.  A remarkable progress in the study of \eqref{H eq} was made in the work \cite{Ni}. It was shown there that the addition of the symmetric term $|x|^{\alpha}$ allows   the existence of radial solutions even for the critical case and beyond. Indeed, it was proved that a positive and radial solution exists for \eqref{H eq} whenever the following relation holds
\begin{equation}\label{Ni cond}
1<q<2_\alpha^* := 2^{*}+\dfrac{2\alpha}{n-2} = \frac{n(\alpha+2)}{n-2}. 
\end{equation}

 The proof in \cite{Ni} makes use of the well-known Mountain Pass Theorem of Ambrosetti-Rabinowitz \cite{AR} via a variational approach. We also point out that  since the term $|x|^{\alpha}$ is increasing, the celebrated theorem of \cite{GNN} about existence of symmetric solutions does not apply.

The nonlinear version of the H\'enon problem \eqref{H eq} is obtained when replacing the Laplacian with the p-Laplacian $\Delta_p:=\diver(|\nabla u|^{p-2}\nabla u)$, $p>1$. This problem was addressed in \cite{Nazarov}, and existence of a positive and radial solution was guaranteed when $p<n$ and the exponent $q$ lies within the range
\begin{equation} \label{cond p.lap}
p<q<p_\alpha^*:= p^* + \frac{\alpha p}{n-p} = \frac{n(\alpha + p)}{n-p},
\end{equation}
being $p^*$ the critical Sobolev exponent.

Other extensions of problem \eqref{H eq} have been studied in recent years, including nonlinear operators \cite{Nazarov, DP}, different boundary conditions \cite{GS,  H13,Sh}, nonlocal operators \cite{SW,L23}, and mixed local–nonlocal operators \cite{SV}, to mention just a few. However, to the best of our knowledge, no results are available when nonstandard growth operators are involved in the equation. The purpose of this article is to address this gap.
 
We consider the following further generalization of the H\'enon  problem
\begin{equation}\label{Henon eq}
\begin{cases}
-\Delta_g u= |x|^\alpha h( u) \quad \text{in }B\\
u>0 \quad \text{in }B\\
u= 0 \quad \text{on }\partial B,\\
\end{cases}
\end{equation}where $\alpha>0$, $B$ denotes the unit ball in $\mathbb{R}^n$,  $g=G'$, $h=H'$ with $G$ and $H$  being N-functions (see Section \ref{preliminaries} for details) and the operator
$$-\Delta_g u := -\diver\left(\dfrac{g(|\nabla u|)}{|\nabla u|}\nabla u \right)$$is the $g$-Laplacian of $u$. We will assume the following growth behavior on $G$ and $H$:
\begin{equation} \label{cond.intro}
p^- G(t) \leq tg(t) \leq p^+ G(t), \qquad 
q^- H(t) \leq th(t) \leq q^+ H(t), \qquad t\geq 0, 
\end{equation}
for    $p^\pm,q^\pm \in (1, n)$.  


We now turn to a discussion of our main contributions. First, we address the generalization of condition \eqref{cond p.lap} in our framework.  Indeed, given an N-function $G$ and a parameter  $\alpha>0$,  we choose the N-function $H$ such that the following relation holds:
\begin{equation}\label{cond p<q.intro}
p^+<q^-.
\end{equation}
This ensures that the nonlinearity $h(u)$ is superlinear with respect to the g-Laplacian. Moreover, we assume that  $H\ll R$ at infinity (see Section \ref{sec.prelim}) with $R$ an N-function satisfying the condition
\begin{equation}\label{cond.intro on H}
\int_0^1 r^{\alpha+n-1} R(  \|s^{1-n}\|_{L^{\widetilde G}((r,1),s^{n-1}ds)})\,dr<\infty, 
\end{equation}
where $\widetilde G$ is the complementary function to $G$, and the Luxemburg's norm is defined in Section \ref{sec.prelim}.

In the particular case of power functions, that is,  $G(t)=t^p$ and $H(t)=t^q$, $p,q>1$, we have
that \eqref{cond p<q.intro} reads as $p<q$ and condition \eqref{cond.intro on H} implies that
\begin{equation*}
\begin{split}
\int_0^1 r^{\alpha+n-1}\left(\int_r^1s^{\frac{(1-n)p}{p-1}}s^{n-1}\,ds \right)^{\frac{q(p-1)}{p}}\,dr = C\int_0^1  r^{\alpha+n-1} \left(r^{\frac{p-n}{p-1}}-1 \right)^{\frac{q(p-1)}{p}}\,dr \leq C\int_0^1  r^{\alpha+n-1 + q\frac{p-n}{p}}\,dr,
\end{split}
\end{equation*}which is finite if and only if $q<p_\alpha^*$, and hence recovering \eqref{cond p.lap}.

\bigskip

In this work, we are interested in proving existence and non-existence of weak solutions for problem \eqref{Henon eq}. To seek non negative solutions, we shall consider the following energy functional
$$
J(u)= \int_B G(|\nabla u|)\,dx -  \int_B |x|^\alpha H( u_{+})\,dx,
$$
defined on the space $X_{rad}(B)$ of suitable radial functions on the ball $B$. Under assumption \eqref{cond.intro on H}, we will see in Section \ref{preliminary results}, that $J$ is indeed well-defined. Moreover, its Frechet derivative is given by
$$
\langle J'(u),v \rangle = \int_B g(|\nabla u|)\frac{\nabla u}{|\nabla u|}\cdot \nabla v\,dx - \int_B  |x|^\alpha h( u_+) v\,dx
$$
for all $v\in X_{rad}(B)$. Hence weak solutions of \eqref{Henon eq} correspond to critical points of $J$.

Our first result concerns the existence of a weak solution to \eqref{Henon eq}.

\begin{theorem} \label{teo1}
Let $G$ and $H$ be N-functions satisfying \eqref{cond.intro}, \eqref{cond p<q.intro} and  $H\ll R$ at infinity with $R$ fulfilling \eqref{cond.intro on H}. Then there exists a positive weak solution $u\in X_{rad}(B)$ to \eqref{Henon eq}.
\end{theorem}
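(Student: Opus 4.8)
The plan is to obtain the solution as a critical point of $J$ on $X_{rad}(B)$ via the Mountain Pass Theorem, following the strategy of \cite{Ni,Nazarov} but adapted to the Orlicz-Sobolev setting. First I would verify the geometric hypotheses of the Mountain Pass Theorem for $J$. Since $p^+ < q^-$ by \eqref{cond p<q.intro}, the superlinear growth $q^- H(t) \le th(t)$ forces $H(t)/t^{p^+} \to \infty$, so near the origin the term $\int_B |x|^\alpha H(u_+)\,dx$ is dominated by $\int_B G(|\nabla u|)\,dx$ (using the Orlicz-Sobolev and Poincaré-type inequalities together with \eqref{cond.intro on H}, which guarantees the embedding $X_{rad}(B)\hookrightarrow L^R(B,|x|^\alpha dx)$ is well-defined and in fact compact into the relevant Orlicz space since $H\ll R$). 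This yields $J(u)\ge \rho_0 >0$ on a small sphere $\|u\| = \rho$. For the far end: fixing any nonnegative $u_0\in X_{rad}(B)$ with $u_0\not\equiv 0$, the condition $th(t)\ge q^- H(t)$ integrates to $H(t)\ge c\,t^{q^-}$ for large $t$, while $G(|\nabla(tu_0)|)\le t^{p^+}G(|\nabla u_0|)$ for $t\ge 1$; since $q^->p^+$, $J(tu_0)\to -\infty$, so some $e = t_1 u_0$ has $J(e)<0$.

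Next I would establish the Palais-Smale condition. Let $(u_k)$ be a PS sequence at the mountain pass level $c$. The standard Ambrosetti-Rabinowitz trick works here: from $J(u_k) - \frac{1}{q^-}\langle J'(u_k),u_k\rangle$ and the inequalities $\frac{1}{p^+}tg(t)\ge G(t)$ and $th(t)\ge q^- H(t)$ we get
$$
c + o(1) + o(1)\|u_k\| \;\ge\; \Big(1 - \frac{p^+}{q^-}\Big)\int_B G(|\nabla u_k|)\,dx,
$$
and since $q^->p^+$ this controls $\int_B G(|\nabla u_k|)\,dx$, hence bounds $(u_k)$ in $X_{rad}(B)$ by the equivalence of the modular and the norm (which relies on $G$ and $\widetilde G$ satisfying $\Delta_2$, a consequence of \eqref{cond.intro} with finite $p^\pm$). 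Then, passing to a subsequence, $u_k \cd u$ in $X_{rad}(B)$; the compactness of the radial embedding into $L^H(B,|x|^\alpha dx)$ (from $H\ll R$ and \eqref{cond.intro on H}) lets me pass to the limit in the lower-order term, and the $(S_+)$ property of the $g$-Laplacian on Orlicz-Sobolev spaces upgrades weak convergence to strong convergence $u_k\to u$. Thus $J$ satisfies PS$_c$ and the Mountain Pass Theorem produces a critical point $u$ with $J(u)=c>0$, so $u\not\equiv 0$.

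Finally I would upgrade $u$ to a genuine positive solution of \eqref{Henon eq}. Testing $\langle J'(u),v\rangle = 0$ with $v = u_- := \min(u,0)$ gives $\int_B g(|\nabla u_-|)\frac{\nabla u_-}{|\nabla u_-|}\cdot\nabla u_-\,dx = 0$, forcing $u_-\equiv 0$, hence $u\ge 0$ and $u$ weakly solves $-\Delta_g u = |x|^\alpha h(u)$; since $u\not\equiv 0$, the Harnack inequality / strong maximum principle for the $g$-Laplacian (valid under the $\Delta_2$ and ellipticity conditions in \eqref{cond.intro}) yields $u>0$ in $B$. The main obstacle I anticipate is the compactness step: one must show carefully that \eqref{cond.intro on H} with $H\ll R$ indeed produces a \emph{compact} embedding of $X_{rad}(B)$ into $L^H(B,|x|^\alpha\,dx)$ — this is the Orlicz-Sobolev analogue of the Strauss-type radial lemma exploited in \cite{Ni,Nazarov}, and it is where the weight $|x|^\alpha$ and the radial symmetry are essential. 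Presumably this compact embedding, and the well-posedness of $J$, are exactly what is proved in the preliminary Section \ref{preliminary results}, so I would invoke it there.
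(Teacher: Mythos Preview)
Your proposal is correct and follows essentially the same route as the paper: mountain-pass geometry (Lemma~\ref{lema.2}), the Palais--Smale condition via the Ambrosetti--Rabinowitz combination together with the compact radial embedding $X_{rad}(B)\hookrightarrow L^H_\mu(B)$ of Proposition~\ref{prop.1} (Lemma~\ref{lema.1}), and strong convergence through the monotonicity of $-\Delta_g$. Your final step---testing with $u_-$ and invoking the strong maximum principle to obtain $u>0$---is actually more explicit than the paper's own proof of Theorem~\ref{teo1}, which stops once the Mountain Pass Theorem yields a nontrivial critical point.
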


The proof of Theorem \ref{teo1} follows the approach introduced by \cite{Ni}. For this purpose, we require several ingredients, such as a suitable version of the Radial Lemma and the compact embeddings of the  radial Orlicz–Sobolev spaces (see Section \ref{Section.embedding}). With these tools, we can then apply an appropriate version of the mountain-pass theorem.

By following a De Giorgi's $L^2$ to $L^{\infty}$ iteration scheme, we will prove that any solution of \eqref{Henon eq} is indeed bounded. Specifically, we have the following.

\begin{theorem} \label{teo_boundedness}
Let $G$ and $H$ be N-functions satisfying \eqref{cond.intro} and \eqref{cond p<q.intro} with $q^+ < r^{-}:= \inf_{t>0} \frac{tR'(t)}{R(t)}$. Then, for any weak solution $u\in X_{rad}(B)$ to \eqref{Henon eq}, there is a constant $C>0$ such that
$$\|u\|_{L^{\infty}(B)}\leq C.$$
\end{theorem}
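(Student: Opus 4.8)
The plan is to carry out a De Giorgi truncation argument entirely in the radial setting. For $k \geq 0$ write $w_k := (u-k)_+$, which belongs to $X_{rad}(B)$ since $u$ does, and test the weak formulation $\langle J'(u), w_k\rangle = 0$ with $v = w_k$. On the set $\{u > k\}$ one has $\nabla w_k = \nabla u$ and, using the left-hand inequality in \eqref{cond.intro} together with the definition of $-\Delta_g$, one gets $p^- \int_B G(|\nabla w_k|)\,dx \le \int_B g(|\nabla u|)\tfrac{\nabla u}{|\nabla u|}\cdot \nabla w_k\,dx = \int_B |x|^\alpha h(u) w_k\,dx$. Since $0 \le |x|^\alpha \le 1$ on $B$ and, on $\{u>k\}$, the growth bound $th(t)\le q^+ H(t)$ combined with the fact that $H \ll R$ at infinity (hence $H(t) \le \varepsilon R(t) + C_\varepsilon$ for all $t$) yields a bound of the form $\int_{A_k} H(u)\,dx \le \varepsilon\int_{A_k} R(u)\,dx + C_\varepsilon |A_k|$, where $A_k := \{u > k\}$. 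Thus we arrive at an energy inequality of the type
\begin{equation*}
\int_B G(|\nabla w_k|)\,dx \le C\int_{A_k} R(u)\,dx + C|A_k|.
\end{equation*}

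Next I would absorb the $R(u)$ term. Because $R$ satisfies $tR'(t)/R(t) \ge r^- > q^+$, the function $R$ is, up to multiplicative constants, comparable to a power $t^{r^-}$ from above in a way that lets us interpolate; more to the point, on $A_k$ we can write $u = w_k + k$ and expand $R(u)$, but the cleaner route is to exploit the embedding side: by the radial Sobolev/Orlicz embedding of Section \ref{Section.embedding}, $X_{rad}(B) \hookrightarrow L^{G_*}(B)$ with $G_*$ the Sobolev conjugate of $G$, and since $q^+ < r^-$ and $r^- \le q^+\cdot(\text{something} < G_*\text{-growth})$ one checks $R$ (restricted to the relevant range) is dominated by $G_*$, so $\int_{A_k} R(u)$ is controlled by $\big(\int_B G(|\nabla u|)\big)^{\theta}$ times a small power of $|A_k|$, with $\theta<1$ coming from the strict inequality $q^+<r^-$. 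Substituting back and using a first, crude a priori bound on $\int_B G(|\nabla u|)$ (which follows from testing with $u$ itself and the same embeddings, as in the proof of Theorem \ref{teo1}), one converts the energy inequality into a genuine \emph{decay} inequality for the level sets.

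The core iteration then proceeds as follows. Set $a_k := \int_{A_k} G(|\nabla w_k|)\,dx$ or, more usefully, work with $\mu(k) := \big(\int_{A_k}|w_k|^{G_*}\big)^{1/G_*}$ together with $|A_k|$. From the Orlicz–Sobolev inequality applied to $w_k$, and the energy inequality, one derives for $h > k$ an estimate
\begin{equation*}
(h-k)\,|A_h|^{1/G_*} \le \Big(\int_{A_h} |w_k|^{G_*}\Big)^{1/G_*} \le \|w_k\|_{L^{G_*}} \le C\Big(\int_B G(|\nabla w_k|)\Big)^{1/p^-} \le C\big(|A_k|\big)^{\beta}
\end{equation*}
with $\beta > 1$, where $\beta > 1$ is exactly what the gap $q^+ < r^-$ buys us (it guarantees the exponent on $|A_k|$ in the absorbed right-hand side exceeds $1/G_*$ in the required normalization). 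This is the standard hypothesis of the De Giorgi–Stampacchia fast-geometric-convergence lemma: $|A_h| \le C(h-k)^{-\gamma}|A_k|^{\beta}$ with $\beta>1$, which forces $|A_{k_0}| = 0$ for some explicit $k_0$ depending only on the data and the a priori energy bound, giving $\|u\|_{L^\infty(B)} \le k_0 =: C$.

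The main obstacle will be step two: making precise the claim that the nonlinear term $\int_{A_k}|x|^\alpha h(u)w_k$ can be absorbed with a \emph{super-unit} power of $|A_k|$ left over, i.e. verifying that the condition $q^+ < r^-$ — rather than a comparison with the Sobolev-conjugate growth $G_*$ directly — is genuinely sufficient. This requires carefully tracking the $\Delta_2$-constants of $H$, $R$ and $G$, using the equivalence of Orlicz norms under $H \ll R$, and invoking the precise form of the radial embedding from Section \ref{Section.embedding} to control $\|u\|$ in the space $L^R(B)$ (or a slightly smaller Orlicz space) by the gradient energy with a subcritical exponent. Once that interpolation inequality is in hand with a strictly sub-unit exponent loss, the rest is the routine De Giorgi iteration and the geometric-convergence lemma.
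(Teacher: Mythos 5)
You have the right strategy in outline (De Giorgi iteration), and several of the early steps are sound: testing with a truncation, using $p^-G(t)\le tg(t)$ and $th(t)\le q^+H(t)$, and noting $|x|^\alpha\le 1$. But the crucial step — the one you flag yourself as the ``main obstacle'' — is precisely where your sketch has a genuine gap, and it is also where your proposed route diverges from the paper's.

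The paper does not attempt to dominate $R$ by the Sobolev conjugate $G_*$; there is no such comparison available from the hypothesis $q^+<r^-$, and the only embedding proved in Section \ref{Section.embedding} is into the \emph{weighted} space $L^R_\mu(B)$ with $\mu=|x|^\alpha\,dx$ (Proposition \ref{prop.1}), not into an unweighted $L^{G_*}(B)$. Your claim that ``$R$ (restricted to the relevant range) is dominated by $G_*$'' and that $\int_{A_k}R(u)$ is controlled by $\big(\int_B G(|\nabla u|)\big)^\theta$ times a small power of $|A_k|$ is asserted, not derived, and I do not see how to make it true from $q^+<r^-$ alone; the exponent you would need to close the iteration (super-unit power of $|A_k|$) cannot be extracted this way. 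Moreover, replacing the weight $|x|^\alpha$ by $1$ at the start discards exactly the feature that makes the radial embedding work; the weighted measure must be kept throughout.

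The paper instead carries out the iteration with geometric levels $w_k=(u-(1-2^{-k}))_+$ and tracks the modular quantity $\int_B|x|^\alpha H(w_k)\,dx$ rather than $|A_k|$. The key tool replacing your informal ``interpolation'' is Lemma \ref{F function}: the composition $F=R\circ H^{-1}$ is an N-function, which requires exactly $q^+<r^-$. This enables an Orlicz--Hölder split
$$\int_B|x|^\alpha H(w_{k+1})\,dx \le 2\,\|H(w_{k+1})\|_{L^F_\mu}\,\|\chi_{\{w_{k+1}>0\}}\|_{L^{\widetilde F}_\mu},$$
and the second factor is estimated via the inverse N-function formula; all four ratios $q^\pm/r^\pm$ being strictly below $1$ (again $q^+<r^-$) produces the strictly sub-unit exponent $\delta$ you are hoping for. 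The first factor is handled with the $L^R_\mu$ embedding and the preceding energy inequality, and the recursion $a_{k+1}\le C_{k+1}a_{k+1}^{1+\delta+\gamma}$ is closed with a numerical lemma, first under a smallness assumption $\int_B|x|^\alpha H(u_+)\,dx<\varepsilon$ and then in general by rescaling $u/C$. That rescaling step is also missing from your plan and is necessary because the smallness assumption is not automatic. In short: the scaffolding is right, but the interpolation/absorption step is not justified and the condition $q^+<r^-$ enters the argument through the $F=R\circ H^{-1}$ composition and the Orlicz--Hölder inequality, not through a comparison of $R$ with $G_*$.
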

We point out that in the case of powers, the assumption $q^+ < r^{-}$ turns into 
$$q<p^{*}_{\alpha}, \quad \text{for any }\alpha>0.$$

The nonexistence of solutions can be obtained for instance with scaling argments in the case of homogeneous operators (see for instance \cite{RoS}). In particular, the H\'enon problem \eqref{H eq} for the p-Laplacian has no weak bounded solutions when $p<n$ and 
\begin{equation*}
q\geq p_\alpha^*.
\end{equation*}
Due to the nonhomogeneous nature of problem \eqref{Henon eq}, in our case we follow the approach of applying a suitable version of Pohožaev-Pucci-Serrin type identity \cite{PuSe} for obtaining a range for non existence of bounded solutions. More precisely, we prove first the following identity for the weak solutions of the $g$-Laplacian.

\begin{theorem}[Pohozaev's type identity] \label{Poho} Assume that $G$ is an N-function such that $G'=g\in C^{2}((0, \infty))$ and for constants $1<p^-<p^+<\infty$ it holds that
$$
(p^{-}-1)g(t) \leq g'(t)t \leq (p^+-1)g(t), \quad t>0.
$$
Let $f: \overline{B}\times \mathbb{R}\to \mathbb{R}$ be a continuous function satisfying $f(x, 0)=0$, and let $u\in L^{\infty}(B)\cap W_0^{1, G}(B)$ be a weak solution of
\begin{equation*}
\begin{cases}
-\diver\left(\dfrac{g(|\nabla u|)}{|\nabla u|} \nabla u\right)= f(x, u) \quad \text{in }B\\
u=0 \quad \text{on }\partial B.
\end{cases}
\end{equation*}
Then, the following identity holds
\begin{align}\label{Pohozaev indentity}
\begin{split}
\int_B &\left(nF(x, u)\,dx + x\cdot \nabla_x F(\cdot, u)\right)dx \\
&\qquad \qquad +\int_B \left(g(|\nabla u|)|\nabla u| - nG(|\nabla u|)\right)dx= \int_{\partial B} x\cdot \nu \left( g(|\nabla u|)|\nabla u| - G(|\nabla u|)\right) d\sigma,
\end{split}
\end{align}
with  $F(x, t)=\int_0^t f(x, s)\,ds$ and where   $\nu$ denotes the outer normal unit vector to $\partial B$.  
\end{theorem}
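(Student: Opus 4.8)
The plan is to use the classical Rellich--Pohozaev multiplier: test the equation against $x\cdot\nabla u$ and integrate by parts. Write $a(t):=g(t)/t$, so that $\nabla_\xi\big(G(|\xi|)\big)=a(|\xi|)\,\xi$ for $\xi\neq0$ and, for a smooth $w$, the chain rule gives $x\cdot\nabla\big(G(|\nabla w|)\big)=\tfrac12\,a(|\nabla w|)\,x\cdot\nabla(|\nabla w|^2)$. The computation will rest on two elementary pointwise identities: first, $\partial_i w\,\partial_i(x_j\partial_j w)=|\nabla w|^2+\tfrac12\,x\cdot\nabla(|\nabla w|^2)$; and second, since $F(x,t)=\int_0^t f(x,s)\,ds$, one has $f(x,w)\nabla w=\nabla\big(F(\cdot,w)\big)-(\nabla_xF)(\cdot,w)$. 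Note already that $x\cdot\nabla u$ does not vanish on $\partial B$, hence is not an admissible test function in $W_0^{1,G}(B)$; the boundary integral in \eqref{Pohozaev indentity} is precisely the price of this.

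For the core computation, multiply $-\diver(a(|\nabla u|)\nabla u)=f(x,u)$ by $x\cdot\nabla u$ and integrate over $B$. On the left, the divergence theorem produces the bulk term $\int_B a(|\nabla u|)\nabla u\cdot\nabla(x\cdot\nabla u)\,dx$ minus the boundary term $\int_{\partial B}a(|\nabla u|)(\partial_\nu u)(x\cdot\nabla u)\,d\sigma$. By the first pointwise identity the bulk term equals $\int_B g(|\nabla u|)|\nabla u|\,dx+\int_B x\cdot\nabla\big(G(|\nabla u|)\big)\,dx$, and one further integration by parts rewrites the last integral as $-n\int_B G(|\nabla u|)\,dx+\int_{\partial B}(x\cdot\nu)\,G(|\nabla u|)\,d\sigma$. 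Since $u\equiv0$ on $\partial B$ its tangential gradient vanishes there, so $\nabla u=(\partial_\nu u)\,\nu$ and $|\nabla u|=|\partial_\nu u|$ on $\partial B$; hence $a(|\nabla u|)(\partial_\nu u)(x\cdot\nabla u)=(x\cdot\nu)\,g(|\nabla u|)|\nabla u|$ there, and the entire left-hand side collapses to $\int_B\big(g(|\nabla u|)|\nabla u|-nG(|\nabla u|)\big)\,dx-\int_{\partial B}(x\cdot\nu)\big(g(|\nabla u|)|\nabla u|-G(|\nabla u|)\big)\,d\sigma$. On the right, the second pointwise identity and an integration by parts give $\int_B f(x,u)(x\cdot\nabla u)\,dx=-n\int_B F(x,u)\,dx-\int_B x\cdot(\nabla_xF)(x,u)\,dx$, the boundary contribution vanishing because $F(x,0)=0$. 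Equating the two expressions and rearranging yields exactly \eqref{Pohozaev indentity}.

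The real work is making this rigorous: the manipulations involve $D^2u$ and an integration by parts for $G(|\nabla u|)$, whereas solutions of a degenerate equation of $g$-Laplacian type are a priori only $C^{1,\beta}$. The structure hypothesis $(p^--1)g(t)\le g'(t)t\le(p^+-1)g(t)$ with $g\in C^2((0,\infty))$, together with $u\in L^\infty(B)$ (hence $f(\cdot,u)\in L^\infty(B)$), places us in the setting of Lieberman-type regularity, giving $u\in C^{1,\beta}(\overline B)$. To legitimise the derivative computations I would then work on the open set $\{\nabla u\neq0\}$, where the equation is uniformly elliptic and Schauder estimates give $u\in C^{2}$ locally, while on $\{\nabla u=0\}$ every integrand above — all built from $G(|\nabla u|)$, $g(|\nabla u|)|\nabla u|$ and $\nabla(|\nabla u|^2)$ — vanishes a.e. by Stampacchia's lemma; combined with known regularity up to the boundary this makes the global integration by parts licit. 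Alternatively, one removes the degeneracy of the operator by a regularisation $g_\varepsilon$, obtains smooth solutions $u_\varepsilon$ of the perturbed problems via the variational scheme used for Theorem \ref{teo1}, writes the now-classical identity for $u_\varepsilon$, and passes to the limit. I expect the limit passage in the boundary integral to be the main obstacle: it is exactly where up-to-the-boundary (not merely interior) $C^1$ control is indispensable, whereas the interior integrals cause no trouble, being dominated by $G(|\nabla u|)\in L^1(B)$ and $f(x,u)u\in L^1(B)$.
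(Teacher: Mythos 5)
Your formal Rellich--Pohozaev computation is exactly the one the paper carries out, and the identity-checking is correct: testing against $x\cdot\nabla u$, using the two pointwise identities, and exploiting $\nabla u = (\partial_\nu u)\nu$ on $\partial B$ yields precisely \eqref{Pohozaev indentity}. The rigor route the paper actually uses is your ``Option B'': the authors invoke Lieberman regularity to get $u\in C^{1,\alpha}(\overline B)$, then (following Cianchi--Maz'ya) replace $a(|\xi|)$ by the non-degenerate $a(\sqrt{\varepsilon+|\xi|^2})$ and $f(x,u(x))$ by a sequence $f_\varepsilon\in C_0^2(B)$ converging uniformly; this yields $u_\varepsilon\in C^3(\overline B)$ with $\|u_\varepsilon\|_{C^{1,\alpha}(\overline B)}\le C$ uniformly, so the classical identity holds for $u_\varepsilon$ and $u_\varepsilon\to u$ in $C^{1,\alpha'}(\overline B)$ lets one pass to the limit, including in the boundary integral — exactly the step you flagged as the crux. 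Two small inaccuracies in your write-up: the paper does not regularize $g$ itself but the kernel $a$ (adding $\varepsilon$ under the square root) and the right-hand side, and the regularized problems are solved as Dirichlet problems with data $f_\varepsilon$, not by re-running the mountain-pass scheme from Theorem~\ref{teo1}.

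Your ``Option A'' has a genuine gap. Stampacchia's lemma (that $\nabla v=0$ a.e.\ on $\{v=c\}$) applies to a function $v\in W^{1,1}$; to use it on $v=|\nabla u|^2$ you would need $|\nabla u|^2\in W^{1,1}(B)$, i.e.\ some control on $D^2u$, which $C^{1,\beta}$ regularity alone does not give for a degenerate $g$-Laplacian. Likewise, interior $C^2$ regularity on $\{\nabla u\neq 0\}$ does not by itself justify a global integration by parts for $x\cdot\nabla\bigl(G(|\nabla u|)\bigr)$, since $G(|\nabla u|)$ is a priori only $C^{0,\beta}$ across the free boundary $\partial\{\nabla u\neq 0\}$ and one would need to rule out concentration there. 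The regularization route avoids all of this and is the one to pursue.
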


An application of the previous identity leads to the following non-existence result.

\begin{theorem}\label{nonexistence result} Let $G$ and $H$ be N-functions satisfying \eqref{cond.intro} and such that $q^-\geq(p^+)_\alpha^*$. Then, there is no solution $u\in L^{\infty}(B)\cap W^{1, G}_0(B)$ of problem \eqref{Henon eq}.
\end{theorem}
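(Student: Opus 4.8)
The plan is to combine the Pohozaev identity of Theorem~\ref{Poho} with the growth hypotheses \eqref{cond.intro} on $G$ and $H$ to derive a contradiction whenever $q^-\ge (p^+)_\alpha^*$. Suppose $u\in L^\infty(B)\cap W_0^{1,G}(B)$ is a solution of \eqref{Henon eq}, which here means $f(x,t)=|x|^\alpha h(t)$, so that $F(x,t)=|x|^\alpha H(t)$ and $x\cdot\nabla_x F(\cdot,u)=\alpha |x|^\alpha H(u)$. Plugging this into \eqref{Pohozaev indentity} gives
\begin{equation*}
(n+\alpha)\int_B |x|^\alpha H(u)\,dx + \int_B\bigl(g(|\nabla u|)|\nabla u| - nG(|\nabla u|)\bigr)dx = \int_{\partial B} x\cdot\nu\,\bigl(g(|\nabla u|)|\nabla u| - G(|\nabla u|)\bigr)\,d\sigma.
\end{equation*}
On $\partial B$ one has $x\cdot\nu=1$ and, since $u=0$ on $\partial B$ with $u>0$ inside, $\nabla u$ points inward, so $g(|\nabla u|)|\nabla u|-G(|\nabla u|)\ge 0$ (this follows from convexity of $G$, i.e. $G(t)\le tG'(t)=tg(t)$, which is also the lower bound in \eqref{cond.intro} when $p^-\ge 1$). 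Hence the boundary term is nonnegative.

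The second ingredient is to pair the Pohozaev identity with the weak formulation tested against $u$ itself, namely $\int_B g(|\nabla u|)|\nabla u|\,dx = \int_B |x|^\alpha h(u)u\,dx$. Using the lower bound $q^- H(u)\le h(u)u$ from \eqref{cond.intro}, and the upper bound $g(t)t\le p^+ G(t)$, I would eliminate the gradient terms: write $\int_B(g(|\nabla u|)|\nabla u|-nG(|\nabla u|))dx \ge \int_B(g(|\nabla u|)|\nabla u|-\tfrac{n}{p^+}g(|\nabla u|)|\nabla u|)dx = (1-\tfrac{n}{p^+})\int_B g(|\nabla u|)|\nabla u|\,dx = (1-\tfrac{n}{p^+})\int_B |x|^\alpha h(u)u\,dx$. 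Since $p^+<n$ the factor $1-n/p^+$ is negative; combining with the energy relation and the sign of the boundary term, one obtains, after dropping the (nonnegative) boundary integral,
\begin{equation*}
(n+\alpha)\int_B |x|^\alpha H(u)\,dx \le \Bigl(\tfrac{n}{p^+}-1\Bigr)\int_B |x|^\alpha h(u)u\,dx \le \Bigl(\tfrac{n}{p^+}-1\Bigr)q^+\int_B |x|^\alpha H(u)\,dx,
\end{equation*}
which is the wrong direction; so more care is needed with the signs and with which bound ($q^-$ or $q^+$, $p^-$ or $p^+$) to use on each term. The correct bookkeeping is to keep the gradient term on the side where it has a definite sign: from $g(t)t\ge p^- G(t)$ one gets $g(|\nabla u|)|\nabla u|-nG(|\nabla u|)\le (1-\tfrac{n}{p^-})g(|\nabla u|)|\nabla u|$ with $1-n/p^-<0$, and then the Pohozaev identity yields $(n+\alpha)\int_B|x|^\alpha H(u)\,dx \le \int_{\partial B}(\cdots)d\sigma + (\tfrac{n}{p^-}-1)\int_B|x|^\alpha h(u)u\,dx$. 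The boundary term must then be controlled from above by the interior gradient energy; this is precisely the delicate point, since a priori the boundary integral is only known to be nonnegative, not bounded. The resolution, following Pucci--Serrin / the classical Henon nonexistence argument, is that for \emph{star-shaped} domains (here the ball) one typically moves the boundary term to play in one's favor by choosing the sign appropriately: with $u>0$ in $B$, $u=0$ on $\partial B$, the boundary term $\int_{\partial B}(g(|\nabla u|)|\nabla u|-G(|\nabla u|))\,d\sigma$ appears with a sign that \emph{reinforces} rather than obstructs the inequality, so it can simply be discarded. Re-deriving the chain with the correct orientation gives $(n+\alpha)\int_B|x|^\alpha H(u)\,dx \le (\tfrac{n}{p^+}-1)^{-1}\cdot(\text{something})$ — in any case the scheme produces
\begin{equation*}
\Bigl(\tfrac{n+\alpha}{q^-} - \tfrac{n-p^+}{p^+}\Bigr)\int_B |x|^\alpha H(u)\,dx \le 0,
\end{equation*}
and the hypothesis $q^-\ge (p^+)_\alpha^* = \frac{n(\alpha+p^+)}{n-p^+}$ is exactly equivalent to $\frac{n+\alpha}{q^-}\le\frac{n-p^+}{p^+}$, wait—one checks $\frac{n-p^+}{p^+}\cdot\frac{1}{n+\alpha}\cdot q^-\ge\frac{n(\alpha+p^+)}{p^+(n+\alpha)}\cdot\frac{n-p^+}{n(\alpha+p^+)}\cdot\frac{p^+(n+\alpha)}{n-p^+}$... the point is that the threshold $(p^+)_\alpha^*$ is chosen so that the bracket above is $\le 0$, forcing $\int_B|x|^\alpha H(u)\,dx=0$, hence $u\equiv 0$ (since $H(t)>0$ for $t>0$ and $u$ is continuous), contradicting $u>0$.

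The main obstacle I anticipate is getting the signs and the choice of exponents exactly right: the Pohozaev identity mixes $G$-terms that are controlled from above by $p^+ G$ and from below by $p^- G$, and $H$-terms controlled by $q^\pm H$, and the nonhomogeneity means one cannot simply scale — one must carefully select, for each occurrence, whether the useful bound is the upper or the lower one so that all error terms point the same way. A secondary technical point is justifying that the boundary term can be discarded: one needs that on $\partial B$ the integrand $g(|\nabla u|)|\nabla u|-G(|\nabla u|)$ is nonnegative (immediate from convexity, since $G(0)=0$ gives $G(t)\le tg(t)$) and that $x\cdot\nu=1>0$, so the whole boundary integral has a fixed sign; whether that sign helps or hurts depends on which form of the interior inequality one has set up, and aligning these is the crux. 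Finally I would record the power-function sanity check: with $G(t)=t^{p}$, $H(t)=t^{q}$ one has $p^\pm=p$, $q^\pm=q$, and $q\ge (p)_\alpha^*=\frac{n(\alpha+p)}{n-p}$ is the known nonexistence threshold \eqref{cond p.lap}, so Theorem~\ref{nonexistence result} is consistent with the classical $p$-Laplacian Henon result.
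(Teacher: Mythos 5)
Your overall plan is the paper's: apply the Pohozaev identity with $f(x,u)=|x|^\alpha h(u)$, observe that the boundary integrand $g(|\nabla u|)|\nabla u|-G(|\nabla u|)\ge 0$ (from $t g(t)\ge p^- G(t)>G(t)$), and then close the loop using the growth hypotheses \eqref{cond.intro} together with the weak formulation tested against $u$. However, the execution as written does not land the contradiction: you have a genuine sign/exponent error that you notice but never resolve, and your final displayed inequality does not actually yield a contradiction.

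Concretely, the inequality $\int_B(g(|\nabla u|)|\nabla u|-nG(|\nabla u|))\,dx \ge (1-\tfrac{n}{p^+})\int_B g(|\nabla u|)|\nabla u|\,dx$ goes the wrong way: from $tg(t)\le p^+G(t)$ one gets $G(t)\ge \tfrac{1}{p^+}tg(t)$, hence $-nG(t)\le -\tfrac{n}{p^+}tg(t)$, which produces the \emph{upper} bound $g(t)t-nG(t)\le(1-\tfrac{n}{p^+})g(t)t$. Your subsequent switch to $p^-$ compounds the problem (it is $p^+$ that is needed here), and you then worry about ``controlling the boundary term from above,'' which is unnecessary: the boundary integral sits on the \emph{right-hand side} of \eqref{Pohozaev indentity} and one only needs its sign. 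The correct chain is: since the boundary term is strictly positive,
\begin{equation*}
0<(n+\alpha)\int_B|x|^\alpha H(u)\,dx+\int_B g(|\nabla u|)|\nabla u|\,dx-n\int_B G(|\nabla u|)\,dx,
\end{equation*}
then use $\int_B g(|\nabla u|)|\nabla u|\,dx=\int_B|x|^\alpha h(u)u\,dx$, $H(u)\le\tfrac{1}{q^-}h(u)u$ and $G(|\nabla u|)\ge\tfrac{1}{p^+}g(|\nabla u|)|\nabla u|$ to bound the right-hand side from above by
\begin{equation*}
\left(\frac{n+\alpha}{q^-}+1-\frac{n}{p^+}\right)\int_B|x|^\alpha h(u)u\,dx,
\end{equation*}
and this bracket is $\le 0$ whenever $q^-\ge\frac{p^+(n+\alpha)}{n-p^+}$, which the hypothesis $q^-\ge(p^+)_\alpha^*$ guarantees — contradiction, because the integral $\int_B|x|^\alpha h(u)u\,dx$ is positive. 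Your final inequality ``$\bigl(\tfrac{n+\alpha}{q^-}-\tfrac{n-p^+}{p^+}\bigr)\int_B|x|^\alpha H(u)\,dx\le 0$'' is on its own compatible with the bracket being nonpositive and the integral positive, so it does \emph{not} force $u\equiv 0$; the strict positivity of the boundary term on the other side of the estimate is what does the work, and that is precisely the step left unfinished in the proposal.
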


Beyond power functions, examples of N-functions satisfying the hypothesis of Theorems \ref{teo1}, \ref{teo_boundedness} and \ref{nonexistence result} include, for instance, the following:
\begin{itemize}
\item[(i)] Power functions exhibiting different behaviors near 0 and at infinity:
 $$G(t)=\frac{t^p}{p} + \frac{t^q}{q} \text{ with }  1<p<q<\infty
$$
In this case $p^-=p$, $p^+=q$.

\item[(ii)] Logarithmic perturbation of powers:
$$
G(t)=t^p \ln^r(e-1+t^q)\text{ with }p,q,r> 0 \text{ tales que } p+qr>1 . 
$$
In this case $p^- = p$, $p^+=p+qr$.

\item[(iii)] Log-double polynomials:
$$
G(t)=t^p (\ln (\ln (e^e-1 +t)))^s, \text{ with } p>1, s>0.
$$
In this case $p^-=p$, $p^+ = p+s$.
\end{itemize}

We highlight that, as is typical in Orlicz settings, the range conditions in Theorems~1.2 and~1.4 are not complementary, in contrast with the classical power case. Indeed, for $G(t)=t^p$ with $p>1$, the relationship between $p$ and $q$ is complementary: existence holds whenever $q < p_\alpha^*$, whereas non-existence arises for bounded solutions when $q \geq p_\alpha^*$.  However, we  point out that when the N-functions in Theorems~1.2 and~1.4 are powers, then we certainly obtain the complementary range.  

\bigskip
The article is organized as follows. In Section~\ref{preliminaries} we introduce the basic notation concerning N-functions and the functional spaces employed throughout the paper. Sections~\ref{preliminary results} and~\ref{Section.embedding} are devoted to establishing useful properties of radial functions. Our main existence result is proved in Section~\ref{Section.embedding}, while the boundedness of weak solutions is addressed in Section~\ref{sec.bound}. Finally, in Section~\ref{sec.no.exist} we prove a non-existence result.

\section{Preliminaries}\label{preliminaries}

In this section, we give some basic definitions and results related to Orlicz and Orlicz-Sobolev  spaces. 

\subsection{N-functions} \label{sec.prelim}
\begin{definition}\label{d2.1}
A function $G \colon [0, \infty) \rightarrow \mathbb{R}$ is called an N-function if it admits the representation
$$G(t)= \int _{0} ^{t} g(\tau) d\tau,$$
where the function $g$ is right-continuous for $t \geq 0$,  positive for $t >0$, non-decreasing and satisfies the conditions
$$g(0)=0, \quad g(\infty)=\lim_{t \to \infty}g(t)=\infty.$$
\end{definition}
\noindent By \cite[Chapter 1]{KR}, an N-function has also the following properties:
\begin{enumerate}
\item[(i)] $G$ is continuous, convex, increasing, even and $G(0) = 0$.
\item[(ii)] $G$ is super-linear at zero and at infinite, that is $$\lim_{x\rightarrow 0} \dfrac{G(x)}{x}=0 \quad \text{ and } \quad \lim_{x\rightarrow \infty} \dfrac{G(x)}{x}=\infty.$$
\end{enumerate}

\noindent We will assume that $g$ is extended as a odd function to the whole $\mathbb{R}$.
		
Given N-functions $G$ and $H$ we say that $G\ll H$ at infinity if
$$
\lim_{t\to\infty} \frac{G(\lambda t)}{H(t)}=0
$$
holds for any $\lambda>0$.
		
An important property for N-functions is the following:
\begin{definition}

An N-function  $G$ satisfies the $\bigtriangleup_{2}$ condition if  there exists $C > 2$ such that
\begin{equation*}
G(2x) \leq C G(x) \,\,\text{~~~for all~~} x \in \mathbb{R}_+.
\end{equation*}
\end{definition}

\noindent By \cite[Theorem 4.1, Chapter 1]{KR},  an N-function  satisfies the $\bigtriangleup_{2}$ condition if and only if there is $p^+ > 1$ such that
\begin{equation*}\label{eq p mas}
tg(t) \leq p^{+} G(t), ~~~~~\forall\, t>0.
\end{equation*}

\noindent  Associated to $G$ is  the N-function  complementary to it which is defined as follows:
\begin{equation}\label{Gcomp}
\widetilde{G} (t) := \sup \left\lbrace tw-G(w) \colon w>0 \right\rbrace .
\end{equation}

The definition of the complementary function assures that the following Young-type inequality holds
\begin{equation}\label{2.5}
st \leq G(t)+\widetilde{G} (s) \text{  for every } s,t \geq 0.
\end{equation}

By \cite[Theorem 4.3,   Chapter 1]{KR}, a necessary and sufficient condition for the N-function $\widetilde{G} $ complementary to $G$ to satisfy the $\bigtriangleup_{2}$ condition is that there is $p^{-} > 1$ such that
\begin{equation*}
p^{-} G(t)\leq 	tg(t), ~~~~~\forall\, t>0.
\end{equation*}
Therefore, from now on, we will assume that for any $t>0$
\begin{equation}\label{assumpt G}
1<p^-\le \frac{tg(t)}{G(t)} \leq p^+<\infty.
\end{equation}

\subsection{Orlicz-Lebesgue and Orlicz-Sobolev spaces}
Given an N-function $G$ 
and an open set $\Omega \subseteq \mathbb{R}^{n}$, 
we consider the spaces :
\begin{align*}
    & L^{G}(\Omega) =
\left\{ u: \Omega \to \mathbb{R} \colon 
    \int_{\Omega}G(|u(x)|)\,dx < \infty \right\},\\
    &W^{1, G}(\Omega)= 
\left\{ u \in L^{G}(\Omega)\colon 
    \int_\Omega G(|\nabla u|)\,dx < \infty \right\}.
\end{align*}
We also define the space $W_0^{1, G}(\Omega)$ as the closure of $C_0^{\infty}(\Omega)$ in $W^{1, G}(\Omega)$. We recall that under the assumption \eqref{assumpt G},  the Orlicz-Lebesgue and the Orlicz-Sobolev spaces are separable and reflexive spaces and are Banach spaces endowed with the Luxemburg's norms
\begin{align*}
\|u\|_{L^G(\Omega)} = \inf \left\{ \lambda>0\colon \int_\Omega G\left(\frac{u}{\lambda} \right)\,dx\leq 1 \right\}, \qquad \|u\|_{W^{1,G}(\Omega)}:=\|u\|_{L^G(\Omega)}+ \|\nabla  u\|_{L^G(\Omega)},
\end{align*}
respectively. In the space $W^{1,G}_0(\Omega)$ it is well-known that $\|\nabla u\|_{L^G(\Omega)}$ is an equivalent norm. 

We recall that norms and modulars can be related as follows: for any $u\in L^G(B)$ we have
\begin{equation} \label{fu.xi}
\xi_{G,-}(\|u\|_{L^G(B)}) \leq \int_B G(|u|)\,dx \leq \xi_{G,+}(\|u\|_{L^G(B)}),
\end{equation}
where $\xi_{G,-}(t)=\min\{t^{p^+}, t^{p^-}\}$ and $\xi_{G,+}=\max\{t^{p^+}, t^{p^-}\}$.

\medskip
We end the section with the following H\"{o}lder inequality (see \cite[Lemma 2.6.5]{D}).
\begin{proposition} \label{holder.mu}Let $(X, \Sigma, \mu)$ be a $\sigma$-finite, complete measure space.
If $f\in L^G(\Omega;d\mu)$, $g\in L^{\widetilde G}(\Omega;d\mu)$ then
$$
\int_\Omega |fg|\,d\mu \leq 2\|f\|_{L^G(\Omega;d\mu)} \|g\|_{L^G(\Omega;d\mu)}
$$
where we denote
$$
\|f\|_{L^G(\Omega; d\mu)}=\inf\left\{ \lambda>0\colon \int_\Omega G\left(\frac{u}{\lambda}\right)\,d\mu\leq 1\right\}.
$$
\end{proposition}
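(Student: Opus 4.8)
The plan is to obtain the inequality as a direct consequence of the Young-type inequality \eqref{2.5} together with the defining property of the Luxemburg norm; the relevant norm on $g$ here is $\|g\|_{L^{\widetilde G}(\Omega;d\mu)}$, $\widetilde G$ being the complementary function \eqref{Gcomp}. First I would dispose of the degenerate cases. If either norm equals $+\infty$ there is nothing to prove. If $a:=\|f\|_{L^G(\Omega;d\mu)}=0$, then $\int_\Omega G(|f|/\lambda)\,d\mu\le 1$ for every $\lambda>0$; were $\mu(\{|f|>0\})>0$, one could pick $\varepsilon>0$ with $\mu(\{|f|\ge\varepsilon\})>0$, and then $\int_\Omega G(|f|/\lambda)\,d\mu\ge \mu(\{|f|\ge\varepsilon\})\,G(\varepsilon/\lambda)\to\infty$ as $\lambda\to 0^+$ (since $G(\infty)=\infty$), a contradiction; hence $f=0$ $\mu$-a.e.\ and both sides of the asserted inequality vanish. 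The same reasoning treats $b:=\|g\|_{L^{\widetilde G}(\Omega;d\mu)}=0$. So it remains to handle $a,b\in(0,\infty)$.

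The only nonroutine point is the ``unit modular'' estimate
$$
\int_\Omega G\!\left(\frac{|f|}{a}\right)d\mu\le 1, \qquad \int_\Omega \widetilde G\!\left(\frac{|g|}{b}\right)d\mu\le 1.
$$
I would prove it by choosing, from the definition of the infimum, sequences $\lambda_k\downarrow a$ and $\theta_k\downarrow b$ with $\int_\Omega G(|f|/\lambda_k)\,d\mu\le 1$ and $\int_\Omega \widetilde G(|g|/\theta_k)\,d\mu\le 1$, and then passing to the limit: since $G$ and $\widetilde G$ are continuous and increasing (property (i) of N-functions), the integrands increase pointwise to $G(|f|/a)$ and $\widetilde G(|g|/b)$ respectively, so the monotone convergence theorem gives the two bounds. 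This is where $\sigma$-finiteness, completeness of $(X,\Sigma,\mu)$ and the measurability of $f,g$ are used, exactly as in \cite[Lemma 2.6.5]{D}.

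With these bounds in hand, I would apply \eqref{2.5} pointwise with $t=|f(x)|/a$ and $s=|g(x)|/b$, obtaining for $\mu$-a.e.\ $x\in\Omega$
$$
\frac{|f(x)\,g(x)|}{ab}\le G\!\left(\frac{|f(x)|}{a}\right)+\widetilde G\!\left(\frac{|g(x)|}{b}\right),
$$
then integrate over $\Omega$ against $d\mu$ and use the unit modular estimates to get
$$
\frac{1}{ab}\int_\Omega |fg|\,d\mu\le \int_\Omega G\!\left(\frac{|f|}{a}\right)d\mu+\int_\Omega \widetilde G\!\left(\frac{|g|}{b}\right)d\mu\le 2.
$$
Multiplying through by $ab$ yields $\int_\Omega|fg|\,d\mu\le 2\|f\|_{L^G(\Omega;d\mu)}\|g\|_{L^{\widetilde G}(\Omega;d\mu)}$, which is the claim. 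I do not expect a genuine obstacle: the only step requiring care is the monotone-limit justification of the unit modular inequality in the general ($\sigma$-finite) measure-space setting.
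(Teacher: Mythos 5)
Your proof is correct and is the standard argument (Young's inequality applied at scale $a=\|f\|_{L^G}$, $b=\|g\|_{L^{\widetilde G}}$, combined with the ``unit modular'' property of the Luxemburg norm, which you correctly justify by monotone convergence). The paper does not prove Proposition~\ref{holder.mu} itself but cites \cite[Lemma 2.6.5]{D}, whose proof proceeds exactly along these lines, so your route matches. One small cosmetic remark: the statement as printed contains two evident typos ($\|g\|_{L^{G}}$ should read $\|g\|_{L^{\widetilde G}}$, and the dummy $u$ in the norm definition should be $f$); you implicitly and correctly read through both.
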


\section{ Notion of solutions and preliminary results on radial functions}\label{preliminary results}

We introduce now the definition of weak solutions to problem \eqref{Henon eq}.

\begin{definition}\label{defi wk sol} We say that $u\in W_0^{1, G}(B)$ is a weak solution of \eqref{Henon eq} if  $u>0$ a.e. in $B$ and
$$
\int_B \dfrac{g(|\nabla u|)}{|\nabla u|}\nabla u \cdot \nabla v\,dx = \int_B|x|^\alpha h( u_{+}) v\,dx,
$$
for every $v\in W_{0}^{1, G}(B)$. 
\end{definition}

 In what follows, we consider the following function space of radially symmetric Orlicz-Sobolev functions:
$$X_{rad}(B):=\left\lbrace u \in W_0^{1, G}(B), u \text{ is radial} \right\rbrace.$$

The associated energy functional to problem \eqref{Henon eq} is

$$
J(u)= \int_B G(|\nabla u|)\,dx -  \int_B |x|^\alpha H( u_{+})\,dx,
$$
defined on the space $X_{rad}(B)$. To prove that $J$ is well-defined in $X_{rad}(B)$, we will need a  decay estimate for radial functions. This is the content of the Strauss-type lemma for Orlicz-Sobolev spaces stated in the next Proposition \ref{strauss.nuevo}.

\begin{proposition} \label{strauss.nuevo}
Let $u\in X_{rad}(B)$. Then, for any $r\in (0,1)$ it holds that
$$
|u(x)| \leq C
\| \nabla u\|_{L^{G}(B)} \|s^{1-n}\|_{L^{\widetilde G}((r,1),s^{n-1}ds)},
$$
where $r=|x|$ and $C>0$ is a constant depending only of $n$.
\end{proposition}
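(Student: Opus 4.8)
The plan is to reduce the estimate to the one–dimensional fundamental theorem of calculus together with the Orlicz H\"older inequality of Proposition~\ref{holder.mu}, applied with the weighted measure $d\mu=s^{n-1}\,ds$ on a subinterval of $(0,1)$.

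\emph{Step 1: the absolutely continuous radial representative.} Since $B$ is bounded and $G$ is convex with $G(0)=0$, one has $G(t)\ge G(1)\,t$ for $t\ge 1$, whence $W^{1,G}(B)\hookrightarrow W^{1,1}(B)$ and in particular $u\in W_0^{1,1}(B)$. Writing $u(x)=\tilde u(|x|)$, the standard theory of radial $W^{1,1}$–functions provides a representative $\tilde u$ that is absolutely continuous on $[\delta,1]$ for every $\delta\in(0,1)$, satisfies $|\nabla u(x)|=|\tilde u'(|x|)|$ for a.e.\ $x$, and has $\tilde u(1)=0$ because $u\in W_0^{1,1}(B)$. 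Therefore, for each fixed $r\in(0,1)$,
\[
\tilde u(r)=-\int_r^1 \tilde u'(s)\,ds,\qquad\text{so}\qquad
|\tilde u(r)|\le \int_r^1 |\tilde u'(s)|\, s^{1-n}\cdot s^{n-1}\,ds.
\]

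\emph{Step 2: H\"older and the norm comparison.} Applying Proposition~\ref{holder.mu} on the finite measure space $\big((r,1),s^{n-1}\,ds\big)$ to $\tilde u'\in L^G$ and $s^{1-n}\in L^{\widetilde G}$ (the latter membership is clear since $s^{1-n}\le r^{1-n}$ on $(r,1)$ and the measure is finite), one gets
\[
|\tilde u(r)|\le 2\,\|\tilde u'\|_{L^G((r,1);\,s^{n-1}ds)}\,\|s^{1-n}\|_{L^{\widetilde G}((r,1);\,s^{n-1}ds)}.
\]
By monotonicity of the Luxemburg norm with respect to the domain, $\|\tilde u'\|_{L^G((r,1);\,s^{n-1}ds)}\le \|\tilde u'\|_{L^G((0,1);\,s^{n-1}ds)}$. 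Finally, since $\int_B G(|\nabla u|/\lambda)\,dx=\omega_{n-1}\int_0^1 G(|\tilde u'(s)|/\lambda)\,s^{n-1}\,ds$ (with $\omega_{n-1}=|\mathbb{S}^{n-1}|$), the convexity inequality $G(t/c)\le c^{-1}G(t)$ for $c\ge1$ lets one absorb the factor $\omega_{n-1}^{-1}$ into a rescaling of the Luxemburg parameter, yielding $\|\tilde u'\|_{L^G((0,1);\,s^{n-1}ds)}\le C_n\|\nabla u\|_{L^G(B)}$ for a constant $C_n$ depending only on $n$. Combining these inequalities gives the claim with $C=2C_n$.

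The only point requiring genuine care is Step~1 --- producing and exploiting the pointwise (absolutely continuous) representative of a radial Sobolev function, and justifying that it vanishes at $r=1$; everything afterwards is the Orlicz H\"older inequality plus elementary convexity estimates. If one prefers to avoid this representative discussion, an alternative is to prove the inequality first for radial $u\in C_0^\infty(B)$, where it follows at once from the computation above, and then pass to the limit along radial smooth functions converging to $u$ in $W_0^{1,G}(B)$ (e.g.\ obtained by dilating $u$ toward the center of $B$ and mollifying with a radial kernel), extracting an a.e.\ convergent subsequence so that the left-hand side converges for a.e.\ $x$ and using continuity of the right-hand side with respect to $\|\nabla u\|_{L^G(B)}$.
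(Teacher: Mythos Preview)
Your proof is correct and follows essentially the same route as the paper: write $u(r)=-\int_r^1 u'(s)\,ds$, apply the Orlicz H\"older inequality of Proposition~\ref{holder.mu} with the measure $s^{n-1}\,ds$, and convert the one-dimensional Luxemburg norm of $u'$ into $\|\nabla u\|_{L^G(B)}$ via polar coordinates. You are somewhat more careful than the paper in justifying the absolutely continuous radial representative and the vanishing at $r=1$ (the paper simply asserts these), but the argument is otherwise identical.
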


\begin{proof}
Since $u\in X_{rad}(B)$ we may, with some abuse of notation, write $u(x)=u(|x|)=u(r)$. Moreover, $u(1)=0$.    Observe that $u(r)=-\int_r^1 u'(s)\,ds$. Consider the measure $d\mu = s^{n-1}ds$. Then, Proposition \ref{holder.mu} yields
\begin{align*}
|u(r)|&\leq \int_r^1 |u'(s)|\,ds  = \int_r^1 |u'(s)| s^{n-1} s^{1-n}\,ds=\int_r^1 |u'(s)|  s^{1-n}\,d\mu\\
&\leq 2 \| u'\|_{L^G((0,1); d\mu)} \|s^{1-n}\|_{L^{\widetilde G}((r,1),d\mu)}.
\end{align*}
Let us compute the first norm. 
Using polar coordinates, we get that
$$
\omega_n\int_0^1 G\left(\frac{|u'(s)|}{\lambda}\right)s^{n-1}\,ds = \int_0^1 \int_{S^{n-1}} G\left(\frac{|u'(s)|}{\lambda}\right)s^{n-1}\, d\sigma ds = \int_B G\left(\frac{|\nabla u(x)|}{\lambda}\right)\,dx
$$
from where it is obtained that
$$
\int_0^1 G\left(\frac{|u'(s)|}{\lambda}\right)s^{n-1}\,ds = 1 \iff \int_B \frac{1}{\omega_n}G\left(\frac{|\nabla u(x)|}{\lambda}\right)\,dx =1.
$$
This gives the following relation
$$
 \| u'\|_{L^G((0,1); d\mu)} = \| \nabla u\|_{L^{\frac{1}{\omega_n}G}(B)} \leq  c_n \| \nabla u\|_{L^{G}(B)}
$$
and the result follows.
\end{proof}

\begin{remark}Observe that when $G(t)=t^p$, $1<p<n$ we get that
\begin{align*}
\|s^{1-n}\|_{L^{\widetilde G}((r,1),d\mu)} 
&= \left( \int_r^1 s^{(1-n)p'} s^{n-1}\,ds\right)^{\frac{1}{p'}}
= \left( \int_r^1 s^\frac{-n+1}{p-1}\,ds\right)^{\frac{p-1}{p}}\\
&= \left[  \frac{p-1}{n-p}\left(r^\frac{p-n}{p-1}-1\right)\right]^{\frac{p-1}{p}}
\leq C\left(  r^\frac{p-n}{p-1}\right)^{\frac{p-1}{p}}= Cr^{1-\frac{n}{p}}.
\end{align*}
Therefore, Proposition \ref{strauss.nuevo} recovers the well-known Strauss' Lemma for radial functions on $W^{1,p}_0(B)$:
$$
|u(x)|\leq C |x|^{1-\frac{n}{p}}\|\nabla u\|_{L^p(B)}.
$$ See \cite[Lemma II.1 and Remark II.3]{Lions}.
\end{remark}

With the Strauss' Lemma at hand, we may  guarantee  that the functiona $J$ is well-defined.

\begin{proposition}\label{condition}
Assume that \eqref{cond.intro on H} holds. Then, $J(u)<\infty$ for any $u\in X_{rad}(B)$.
\end{proposition}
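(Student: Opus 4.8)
The plan is to check that each of the two terms in $J(u)=\int_B G(|\nabla u|)\,dx-\int_B|x|^\alpha H(u_+)\,dx$ is finite. The first one is immediate: since $u\in X_{rad}(B)\subset W^{1,G}(B)$, we have $\int_B G(|\nabla u|)\,dx<\infty$ by the very definition of the Orlicz--Sobolev space. So everything reduces to bounding the potential term $\int_B|x|^\alpha H(u_+)\,dx$.

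First I would rewrite this integral in polar coordinates. Writing $u(x)=u(r)$, $r=|x|$, since $u$ is radial,
$$\int_B|x|^\alpha H(u_+(x))\,dx=\omega_n\int_0^1 r^{\alpha+n-1}H(u_+(r))\,dr,$$
with $\omega_n=|S^{n-1}|$. Then I would plug in the Strauss-type bound of Proposition~\ref{strauss.nuevo}: setting $M:=C\|\nabla u\|_{L^G(B)}$ and $\phi(r):=\|s^{1-n}\|_{L^{\widetilde G}((r,1),s^{n-1}ds)}$, one has $0\le u_+(r)\le|u(r)|\le M\phi(r)$ for every $r\in(0,1)$, and since $H$ is non-decreasing on $[0,\infty)$, $H(u_+(r))\le H(M\phi(r))$. (If $u\equiv 0$ there is nothing to prove, so we assume $M>0$.)

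Next I would use the hypothesis $H\ll R$ at infinity to trade $H$ for $R$. Applying the definition with $\lambda=M$, there is $t_M>0$ such that $H(Mt)\le R(t)$ for all $t\ge t_M$; for $0\le t<t_M$ monotonicity gives $H(Mt)\le H(Mt_M)=:C_M$. Hence $H(Mt)\le R(t)+C_M$ for every $t\ge 0$, so
$$\omega_n\int_0^1 r^{\alpha+n-1}H(u_+(r))\,dr\le\omega_n\int_0^1 r^{\alpha+n-1}R(\phi(r))\,dr+\omega_n C_M\int_0^1 r^{\alpha+n-1}\,dr.$$
The last integral is finite because $\alpha+n-1>-1$, and the first integral on the right is exactly the quantity assumed finite in \eqref{cond.intro on H}. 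Together with the finiteness of $\int_B G(|\nabla u|)\,dx$, this yields $J(u)<\infty$.

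I do not expect a serious obstacle here. The one point needing a little care is the passage from $H$ to $R$: the relation $H\ll R$ at infinity only controls large arguments, so one must record the global inequality $H(Mt)\le R(t)+C_M$ with a constant $C_M$ depending on $u$ through $M=C\|\nabla u\|_{L^G(B)}$. It is also worth noting that $\phi$ is finite for each fixed $r\in(0,1)$ and monotone (hence measurable), with $\phi(1)=0$, so the composition $R(\phi(\cdot))$ is meaningful; the blow-up of $\phi(r)$ as $r\to 0^+$ is precisely what condition \eqref{cond.intro on H} is designed to absorb.
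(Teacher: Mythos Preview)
Your argument is correct and follows the same skeleton as the paper's proof: pass to polar coordinates, apply the Strauss-type estimate of Proposition~\ref{strauss.nuevo}, and then invoke the integral hypothesis \eqref{cond.intro on H}. The only real difference lies in how the scalar factor $M=C\|\nabla u\|_{L^G(B)}$ inside $H(M\phi(r))$ is absorbed. The paper uses the growth condition \eqref{cond.intro} on $H$ to write $H(M\phi(r))\le\xi_{H,+}(M)\,H(\phi(r))$ and then asserts $\int_0^1 r^{\alpha+n-1}H(\phi(r))\,dr<\infty$ ``in view of \eqref{cond.intro on H}'' (implicitly using that $H\ll R$ forces $H$ to satisfy the same integral bound as $R$, a point only spelled out later in the proof of Proposition~\ref{prop.1}). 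You instead use $H\ll R$ at infinity directly to obtain $H(Mt)\le R(t)+C_M$ and then appeal to \eqref{cond.intro on H} in its stated form for $R$. Your route makes the passage from $H$ to $R$ explicit and does not require \eqref{cond.intro} on $H$ at this step; the paper's route, on the other hand, yields the quantitative estimate $\int_B|x|^\alpha H(u)\,dx\le C\,\xi_{H,+}(\|\nabla u\|_{L^G(B)})$, which is reused when proving continuity of the embedding $X_{rad}(B)\hookrightarrow L_\mu^R(B)$.
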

\begin{proof}

For  $u\in X_{rad}(B)$,  from Proposition \ref{strauss.nuevo} and the fact the $H$ is increasing we get that
\begin{align} \label{cota}
\begin{split}
\int_B |x|^\alpha H( u )\,dx  &\ =C\int_0^1 r^{\alpha+n-1}H(u(r))\,dr \\& \leq  C
\int_0^1 r^{\alpha+n-1} H(  \|s^{1-n}\|_{L^{\widetilde G}((r,1),s^{n-1}ds)} \|\nabla u\|_{L^G(B)})\,dr\\
&\leq C
\xi_{H,+}(\|\nabla u\|_{L^G(B)})
\int_0^1 r^{\alpha+n-1} H(  \|s^{1-n}\|_{L^{\widetilde G}((r,1),s^{n-1}ds)})\,dr <\infty,
\end{split}
\end{align}
in view of assumption \eqref{cond.intro on H}.
\end{proof}

\begin{remark}
We point out that the following Strauss's Lemma is given in \cite{AFS}. Assuming that $g$ is strictly increasing and that $G$ satisfies the $\Delta_2$-condition, then, there is a constant $C>0$ such that for every $u\in X_{rad}(B)$ there holds
\begin{equation*}
|u(x)|\leq G^{-1}\left(C|x|^{1-n}\int_{\mathbb{R}^n}G(|u|)+G(|\nabla u|)\,dx\right),
\end{equation*}
where $G^{-1}$ is the inverse function of $G$ and $u$ is extended by $0$ outside $B$. Hence,  by Poncar\'e inequality (see for instance \cite[Lemma 2.4]{FO}), we may write the previous expression as
\begin{equation*} 
|u(x)|\leq G^{-1}\left(C|x|^{1-n}\int_{B}G(|\nabla u|)\,dx\right).
\end{equation*}

Applying the above Strauss' Lemma to 
$v=u/\|\nabla u\|_{L^G(B)}$, since $G^{-1}$  is  increasing, we obtain that
\begin{equation*}
\begin{split}
\dfrac{|u(x)|}{\|\nabla u\|_{L^{G}(B)}} \leq G^{-1}\left(C|x|^{1-n}\int_{B}G\left(\dfrac{|\nabla u|}{\|\nabla u\|_{L^G(B)}}\right)\,dx\right)= G^{-1}\left(C|x|^{1-n} \right),
\end{split}
\end{equation*}
which implies
\begin{equation*}\label{radial}
|u(x)|\leq G^{-1}\left(C|x|^{1-n} \right)\|\nabla u\|_{L^G(B)}.
\end{equation*}
However, in contrast to Proposition \ref{strauss.nuevo}, in the power case the following decay estimate is obtained
$$
u(x)\leq C |x|^{\frac{1-n}{p}}\|\nabla u\|_{L^p(B)},
$$which is not optimal.
\end{remark}

\section{Embedding results for radial functions} \label{Section.embedding}

\begin{lemma} \label{lemma.1}Let $\mu$ be a non-negative Radon measure absolutely continuous with respect to the Lebesgue measure in $\mathbb{R}^n$.
 Let $\Omega\subset \R^n$ be a bounded domain and let $\{u_k\}_{k \in \N}$ be a bounded sequence in $L_{\mu}^G(\Omega)$ such that $\{u_k\}_{k\in\N}$ converges to $u$ a.e. Then $u\in L_\mu^G(\Omega)$ and $u_k\to u$ strongly in $L_{\mu}^R(\Omega)$ for any $R\ll G$.
\end{lemma}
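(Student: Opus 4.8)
The plan is to combine a Vitali-type equi-integrability argument with the defining growth gap $R \ll G$. First I would record the qualitative meaning of $R \ll G$: for every $\lambda > 0$, $R(\lambda t)/G(t) \to 0$ as $t \to \infty$, which is exactly the statement that for each $\varepsilon > 0$ there is $T_\varepsilon > 0$ with $R(\lambda t) \le \varepsilon\, G(t)$ whenever $t \ge T_\varepsilon$. Let $M := \sup_k \int_\Omega G(|u_k|)\,d\mu < \infty$ be the given modular bound (after normalising the Luxemburg norm bound to a modular bound via \eqref{fu.xi}, using that $G \in \Delta_2$ under \eqref{assumpt G}). Fatou's lemma applied to the a.e.\ convergence $u_k \to u$ immediately gives $\int_\Omega G(|u|)\,d\mu \le M < \infty$, so $u \in L^G_\mu(\Omega)$; this disposes of the first assertion.

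For the strong convergence in $L^R_\mu(\Omega)$, since $R \in \Delta_2$ (being dominated by $G$ and itself an N-function — or one simply works with the modular, which controls the Luxemburg norm in $\Delta_2$ spaces) it suffices to prove $\int_\Omega R(|u_k - u|)\,d\mu \to 0$. Fix $\varepsilon > 0$. By convexity of $R$ and $\Delta_2$, $R(|u_k - u|) \le C_R\big(R(2|u_k|) + R(2|u|)\big)$ pointwise, so the family $\{R(|u_k-u|)\}_k$ is dominated by an equi-integrable family: indeed for any measurable $E \subset \Omega$,
\[
\int_E R(|u_k-u|)\,d\mu \le C_R \int_E R(2|u_k|)\,d\mu + C_R \int_E R(2|u|)\,d\mu,
\]
and I would estimate $\int_E R(2|u_k|)\,d\mu$ by splitting $E = (E \cap \{|u_k| \le T\}) \cup (E \cap \{|u_k| > T\})$: on the first piece $R(2|u_k|) \le R(2T)$ so the contribution is $\le R(2T)\mu(E)$; on the second piece, choosing $T = T_\varepsilon$ for $\lambda = 2$ gives $R(2|u_k|) \le \varepsilon G(|u_k|)$ so the contribution is $\le \varepsilon M$, uniformly in $k$. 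Thus $\sup_k \int_E R(2|u_k|)\,d\mu \le R(2T_\varepsilon)\mu(E) + \varepsilon M$, which is small if $\mu(E)$ is small; the term with $|u|$ is handled identically (it is a single integrable function). This establishes uniform absolute continuity of the integrals $\int_E R(|u_k-u|)\,d\mu$ over $k$.

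With equi-integrability in hand, I would invoke the Vitali convergence theorem on the bounded-measure space $(\Omega, \mu)$ (note $\mu(\Omega) < \infty$ since $\mu$ is a Radon measure absolutely continuous w.r.t.\ Lebesgue and $\Omega$ is bounded): the integrands $R(|u_k-u|)$ converge to $0$ $\mu$-a.e.\ (by continuity of $R$ and $R(0)=0$), they are equi-integrable, and $\mu$ is finite, hence $\int_\Omega R(|u_k-u|)\,d\mu \to 0$. Converting back to the Luxemburg norm using the $\Delta_2$ estimate \eqref{fu.xi} for $R$ (so $\|u_k-u\|_{L^R_\mu} \le \max\{(\cdot)^{1/r^-}, (\cdot)^{1/r^+}\}$ of the modular) yields $u_k \to u$ strongly in $L^R_\mu(\Omega)$.

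The main obstacle — really the only non-bookkeeping point — is extracting the quantitative tail bound $R(2|u_k|) \le \varepsilon G(|u_k|)$ on $\{|u_k| \ge T_\varepsilon\}$ uniformly in $k$ from the qualitative hypothesis $R \ll G$; once that uniform smallness of the "large values" part is secured, the splitting above makes the $G$-modular bound $M$ do all the work and Vitali closes the argument. One should also take mild care that $R$ is a genuine N-function so that $R \in \Delta_2$ is automatic from $R \ll G$ together with $G \in \Delta_2$ (alternatively one simply assumes, as is standard in this setting and consistent with the paper's conventions, that all N-functions appearing satisfy $\Delta_2$); if one prefers not to assume $R \in \Delta_2$, the same Vitali argument still gives convergence of the modular $\int_\Omega R(|u_k-u|)\,d\mu \to 0$, which is the form actually used later in the paper.
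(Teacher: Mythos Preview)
Your proof is correct and rests on the same key idea as the paper's: both use Fatou for the first assertion, and both exploit the tail estimate $R(t)\le \varepsilon\,G(t)$ for $t\ge T_\varepsilon$ (from $R\ll G$) to make the uniform modular bound $\sup_k\int G(|u_k|)\,d\mu<\infty$ absorb the ``large values'' contribution. The difference is in packaging. The paper argues by Egorov: it splits $\Omega$ into a set $E_\delta$ of uniform convergence and a small-measure remainder $\Omega\setminus E_\delta$, and on the remainder further splits on $\{|u_k-u|\ge T\}$ versus its complement. You instead establish equi-integrability of $\{R(|u_k-u|)\}_k$ directly (splitting on $\{|u_k|\ge T\}$ and $\{|u|\ge T\}$) and then invoke the Vitali convergence theorem on the finite measure space $(\Omega,\mu)$. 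Your route is a bit more conceptual and avoids the explicit Egorov bookkeeping; the paper's argument is more self-contained and does not name Vitali. One small correction: the implication ``$R\ll G$ and $G\in\Delta_2$ $\Rightarrow$ $R\in\Delta_2$'' is not true in general, but as you correctly note, the modular convergence $\int_\Omega R(|u_k-u|)\,d\mu\to 0$ is what the argument actually delivers and what is used downstream (and in the paper's applications $R$ satisfies $\Delta_2$ via \eqref{cond.intro} anyway).
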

\begin{proof}
Let us observe that $u\in L_{\mu}^G(\Omega)$. Indeed, by Fatou's Lemma, we get that
$$
\int_\Omega G(|u|)\,d\mu \leq \liminf_{k\to\infty} \int_\Omega G(|u_k|)\,d\mu \leq C_0,
$$where we have used that $u_k$ converges to $u$ $\mu$-a.e. as well.

Let us prove that $u_k\to u$ strongly in $L_{\mu}^R(\Omega)$ when $R\ll G$. This condition means that 
$\frac{R(t)}{G(t)}\to 0$ as  $t\to\infty$. Then, for any $\varepsilon>0$, there is $T=T(\varepsilon)>0$ such that
\begin{equation}\label{R and G}
R(t)\leq \frac{\varepsilon}{8C} G(t) \quad \text{ for all } t \geq T,
\end{equation} 
with $C=C_0 c$, being $c$ the constant in the  $\Delta_2$-condition. For further reference, define for each $k>0$, the set:
$$\Omega_{T, k}:=\left\lbrace x\in \Omega: |u_k(x)-u(x)|\geq T \right\rbrace.$$

 Since $u_k\to u$ $\mu$-a.e. in $\Omega$, by Egorov's Theorem, given $\delta>0$ there exists a measurable set $E_\delta \subset \Omega$ such that $\mu(\Omega\setminus E_\delta)<\delta$ and $u_k\to u$ uniformly in $E_\delta$. Due to the uniform convergence, given $\delta>0$, there is $K>0$ such that $|u_k(x)-u(x)|\leq \delta$ for all $x\in E_\delta$ and for all $k>K$. Since $R$ is continuous and increasing, $R(|u_k-u|)\leq R(\delta)$ in $E_{\delta}$. We choose $\delta$ such that 
\begin{equation}\label{cond delta}
\delta<\min\left\lbrace T,\dfrac{ \varepsilon}{4R(T)}\right\rbrace, \quad R(\delta)\leq \frac{\varepsilon}{2\mu(\Omega)}
\end{equation}and take the corresponding $K=K(\delta)$. Then, for $k> K$, we get from \eqref{cond delta} that
$$
\int_{E_{\delta}} R(|u_k-u|)\,d\mu \leq R(\delta)\mu(\Omega)\leq \frac{\varepsilon}{2}.
$$

We next consider the set $\Omega\setminus E_{\delta}$.  Observe that  for $k> K$, we have 
$$
\Omega\setminus E_{\delta} = \left\lbrace x\in \Omega: |u_k(x)-u(x)|\geq T, \text{ for some }k\right\rbrace \cup \left\lbrace x\in \Omega: \delta \leq |u_k(x)-u(x)|< T, \text{ for some }k\right\rbrace,
$$
and moreover $\mu(\Omega\setminus E_{\delta})<\delta$. Observe that since $\delta<T$,
$$\Omega\setminus E_{\delta}= \Omega_{T, k} \cup \Omega \setminus (\Omega_{T, k}\cap E_{\delta}).$$  Then, for a fixed $k>K$ and recalling \eqref{R and G} and \eqref{cond delta}, we have
\begin{equation}
\begin{split}
\int_{\Omega\setminus E_\delta} R(|u_k-u|)\,d\mu& =   \int_{\Omega_{T, k}} R(|u_k-u|)\,d\mu + \int_{\Omega \setminus (\Omega_{T, k}\cap E_\delta)} R(|u_k-u|)\,d\mu\\ & \leq 
 \frac{\varepsilon}{8C}\int_{\Omega_{T, k}} G(|u_k-u|)\,d\mu + \int_{\Omega \setminus (\Omega_{T, k}\cap E_\delta)} R(T)\,d\mu\\ & \leq  \frac{c\varepsilon}{8C}\left( \int_{\Omega} G(|u_k|)\,d\mu  + \int_\Omega G(|u|)\,d\mu\right)+R(T)\delta
 \leq \frac{\varepsilon}{2}.
\end{split}
\end{equation}

\medskip

Therefore, for all $k>K$,
$$
\int_\Omega R(|u_k-u|)\,d\mu = \int_{\Omega\setminus E_\delta} R(|u_k-u|)\,d\mu + \int_{E_\delta} R(|u_k-u|)\,d\mu \leq \varepsilon
$$
which gives that $u_k\to u$ strongly in $L_{\mu}^R(\Omega)$.
\end{proof}

\begin{proposition} \label{prop.1}
Let $G$ be an N-function and $\alpha>0$. Let $R$ be an N-function such that \eqref{cond.intro on H} holds. Then, if $\mu= |x|^\alpha\,dx$, we have that
\begin{itemize}
\item[(i)]the inclusion $X_{rad}(B)\subset L_{\mu}^R(B)$ is continuous;
\item[(ii)] the inclusion $X_{rad}(B)\subset L_{\mu}^H(B)$ is compact for any $H$ such that $H\ll R$ at infinity.
\end{itemize} 
\end{proposition}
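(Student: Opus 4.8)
The plan is to use the pointwise Strauss-type estimate from Proposition \ref{strauss.nuevo} together with the convergence lemma just proved (Lemma \ref{lemma.1}), applied to the measure $\mu = |x|^\alpha\,dx$, which is absolutely continuous with respect to Lebesgue measure on $\mathbb{R}^n$. For part (i), I would argue exactly as in the proof of Proposition \ref{condition}: given $u \in X_{rad}(B)$, Proposition \ref{strauss.nuevo} gives $|u(x)| \le C \|\nabla u\|_{L^G(B)}\, \|s^{1-n}\|_{L^{\widetilde G}((r,1),s^{n-1}ds)}$ for $r = |x|$; then, using that $R$ is increasing, convex and satisfies $\Delta_2$, and passing to polar coordinates,
$$
\int_B R(|u|)\,d\mu = C\int_0^1 r^{\alpha+n-1} R(|u(r)|)\,dr \le C\,\xi_{R,+}\!\left(\|\nabla u\|_{L^G(B)}\right) \int_0^1 r^{\alpha+n-1} R\!\left( \|s^{1-n}\|_{L^{\widetilde G}((r,1),s^{n-1}ds)}\right)dr,
$$
which is finite by \eqref{cond.intro on H}. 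A homogeneity/normalization argument (replace $u$ by $u/\|\nabla u\|_{L^G(B)}$ and use that the modular controls the Luxemburg norm via \eqref{fu.xi}) then upgrades this modular bound to the norm inequality $\|u\|_{L^R_\mu(B)} \le C\|\nabla u\|_{L^G(B)}$, giving continuity of the inclusion.

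For part (ii), let $\{u_k\}$ be bounded in $X_{rad}(B)$. The space $X_{rad}(B)$ is a closed subspace of the reflexive space $W^{1,G}_0(B)$, so up to a subsequence $u_k \cd u$ weakly in $W^{1,G}_0(B)$ with $u \in X_{rad}(B)$. The key intermediate step is to produce an \emph{a.e. pointwise} convergent subsequence. For this I would use the radial structure: by Proposition \ref{strauss.nuevo} the functions $u_k$ are uniformly bounded on every annulus $\{r \le |x| \le 1\}$ with $r > 0$, and the bound $|u_k(r_1) - u_k(r_2)| \le \int_{r_1}^{r_2}|u_k'(s)|\,ds$ together with Hölder's inequality (Proposition \ref{holder.mu}) in the variable $s$ gives equicontinuity on compact subsets of $(0,1]$; hence by Arzelà--Ascoli and a diagonal argument a subsequence converges locally uniformly on $(0,1]$, in particular a.e.\ in $B$, necessarily to $u$. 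Now apply Lemma \ref{lemma.1} with $\Omega = B$, the measure $\mu = |x|^\alpha\,dx$, and the N-function $R$ in the role of "$G$": since $\{u_k\}$ is bounded in $L^R_\mu(B)$ by part (i), and $H \ll R$ at infinity, Lemma \ref{lemma.1} yields $u_k \to u$ strongly in $L^H_\mu(B)$. This proves compactness of $X_{rad}(B) \hookrightarrow L^H_\mu(B)$.

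The main obstacle is the pointwise-convergence step in (ii): weak convergence in $W^{1,G}_0(B)$ alone does not give a.e.\ convergence, and one cannot invoke a standard Rellich--Kondrachov compactness for the full Sobolev space (indeed the whole point of working in $X_{rad}$ is that without the radial restriction the embedding into the weighted space can fail). The remedy is precisely the radial lemma: I must check carefully that the equicontinuity estimate on annuli is uniform in $k$ — this follows because $\|u_k'\|_{L^G((0,1);s^{n-1}ds)} \le c_n \|\nabla u_k\|_{L^G(B)}$ is uniformly bounded (as in the proof of Proposition \ref{strauss.nuevo}) and the relevant $L^{\widetilde G}$ norm of $s^{1-n}$ over $(r_1,1)$ is finite and monotone in $r_1$. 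Once local uniform convergence on $(0,1]$ is in hand, the measure $\mu$ puts no mass near the origin in a way that matters (the weight $|x|^\alpha$ is locally integrable and the global modular bound from (i) handles the region near $0$), so Lemma \ref{lemma.1} applies directly and closes the argument.
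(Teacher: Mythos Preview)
Your proposal is correct; the overall architecture (Strauss estimate $+$ condition \eqref{cond.intro on H} for (i), then a.e.\ convergence $+$ Lemma~\ref{lemma.1} for (ii)) matches the paper's. For part (i) the paper phrases continuity sequentially, bounding $\int_B R(u_k-u)\,d\mu \le C\,\xi_{R,+}(\|\nabla u_k-\nabla u\|_{L^G(B)})\to 0$, which is equivalent to your modular-plus-normalization route.

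The one substantive difference is how you obtain the a.e.-convergent subsequence in (ii). The paper simply invokes the standard compact embedding $W^{1,G}_0(B)\hookrightarrow L^G(B)$ (valid on any bounded smooth domain under the $\Delta_2$-condition), which gives strong $L^G$-convergence of a subsequence and hence a.e.\ convergence; Lemma~\ref{lemma.1} then closes the argument exactly as you say. Your remark that ``one cannot invoke a standard Rellich--Kondrachov'' conflates two embeddings: the one into the \emph{weighted} space $L^H_\mu(B)$ with possibly supercritical $H$ indeed fails without radiality, but the unweighted embedding $W^{1,G}_0(B)\hookrightarrow L^G(B)$ is always compact and is all that is needed for the a.e.\ step. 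Your Arzel\`a--Ascoli route via the radial lemma is a perfectly valid alternative---it stays within the one-variable radial framework and even yields the stronger conclusion of local uniform convergence on $(0,1]$---but it costs a few extra lines that the paper avoids.
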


\begin{proof}
First, we prove that the inclusion $X_{rad}(B)\subset L_{\mu}^R(B)$ is continuous. Let  $\left\lbrace u_k \right\rbrace_{k\in\N}$ be a sequence converging to some $u$ in $X_{rad}(B)$.  By  \eqref{cota} and assumption  \eqref{cond.intro on H}, we get
\begin{equation}
\begin{split}
\int_B R(u_k-u)\,d\mu & \leq \int_0^1 r^{\alpha+n-1} R\left(C\|\nabla u_k-\nabla u\|_{L^G(B)}\|s^{1-n}\|_{L^{\tilde{G}}((r, 1), s^{n-1}ds)}\right)\,dr\\& \leq C \xi_{R,+}(\|\nabla u_k-\nabla u\|_{L^G(B)})\int_0^1 r^{\alpha+n-1} R\left(\|s^{1-n}\|_{L^{\tilde{G}}((r, 1), s^{n-1}ds)}\right)\,dr \\ &  = C \xi_{R,+}(\|\nabla u_k-\nabla u\|_{L^G(B)}) \to 0
\end{split}
\end{equation}as $k\to \infty$. This proves item $(i)$.


To prove $(ii),$ let $\{u_k\}_{k\in\N}$ be a bounded sequence in $X_{rad}(B)$. Observe that since $H\ll R$ at infinity, $H$ also satisfies \eqref{cond.intro on H} and hence the inclusion is continuous. Next, By compactness of the embedding $W^{1, G}_{0}(\Omega)$ into $L^G(\Omega)$ for any bounded and smooth set $\Omega$, we get up to a subsequence  that $u_k\to u$  a.e. in $B$. Moreover, by  Proposition \ref{strauss.nuevo}, the sequence $\{ u_k\}_{k\in\N}$ is bounded in $L_{\mu}^R(B)$ since $R$ satisfies \eqref{condition} by assumption. Hence, by Lemma \ref{lemma.1},
$$
 u_k \to  u \text{ strongly in } L_{\mu}^H(B)
$$
for any $H\ll R$, and the proposition follows.
\end{proof}

\bigskip

\section{Existence of solutions: proof of Theorem \ref{teo1} }  \label{sec.exist}
 
\subsection{The Palais-Smale condition}

Let us start proving that under suitable assumptions, $J$ satisfies the Palais-Smale condition (see Definition \ref{def1}).

\begin{lemma} \label{lema.1}
Assume \eqref{cond.intro} and \eqref{cond p<q.intro}.  If $H\ll R$ at infinity, with $R$ satisfying \eqref{cond.intro on H}, then the functional  $J$ satisfies the Palais-Smale condition.
\end{lemma}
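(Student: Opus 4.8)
To prove that $J$ satisfies the Palais-Smale condition, I would start with a sequence $\{u_k\}\subset X_{rad}(B)$ such that $J(u_k)$ is bounded and $J'(u_k)\to 0$ in $X_{rad}(B)^*$, and show it has a strongly convergent subsequence. The first task is \emph{boundedness} of $\{u_k\}$ in $X_{rad}(B)$, i.e. of $\|\nabla u_k\|_{L^G(B)}$. This is the standard Ambrosetti--Rabinowitz trick: using \eqref{cond.intro}, write
\begin{align*}
C(1+\|\nabla u_k\|_{L^G(B)}) &\geq J(u_k) - \tfrac{1}{q^-}\langle J'(u_k),u_k\rangle \\
&\geq \int_B G(|\nabla u_k|)\,dx - \tfrac{1}{q^-}\int_B g(|\nabla u_k|)|\nabla u_k|\,dx + \tfrac{1}{q^-}\int_B|x|^\alpha\big(h(u_{k,+})u_{k,+} - q^- H(u_{k,+})\big)\,dx,
\end{align*}
and observe that the last integrand is nonnegative by the second inequality in \eqref{cond.intro}, while the first two terms are bounded below by $(1 - p^+/q^-)\int_B G(|\nabla u_k|)\,dx$, whose coefficient is strictly positive by \eqref{cond p<q.intro}. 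Combining this with the modular--norm relation \eqref{fu.xi}, $\int_B G(|\nabla u_k|)\,dx \geq \xi_{G,-}(\|\nabla u_k\|_{L^G(B)})$, forces $\|\nabla u_k\|_{L^G(B)}$ to stay bounded (using that $\xi_{G,-}(t)$ grows faster than $t$ at infinity).

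\emph{Extraction of a candidate limit.} By reflexivity of $W_0^{1,G}(B)$ (hence of the closed subspace $X_{rad}(B)$) there is a subsequence, still denoted $\{u_k\}$, with $u_k\cd u$ weakly in $X_{rad}(B)$. By the compact embedding in Proposition~\ref{prop.1}(ii) applied with this $H$ and $R$ (legitimate since $H\ll R$ and $R$ satisfies \eqref{cond.intro on H}), $u_k\to u$ strongly in $L^H_\mu(B)$ with $\mu=|x|^\alpha\,dx$; along a further subsequence $u_k\to u$ a.e. in $B$.

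\emph{Strong convergence of gradients.} This is the main obstacle and the place where the Orlicz setting requires care. The idea is to test the (almost-)equations with $u_k-u$: from $J'(u_k)\to0$ and boundedness of $\{u_k-u\}$,
$$
\langle J'(u_k),u_k-u\rangle = \int_B g(|\nabla u_k|)\frac{\nabla u_k}{|\nabla u_k|}\cdot\nabla(u_k-u)\,dx - \int_B|x|^\alpha h(u_{k,+})(u_k-u)\,dx \longrightarrow 0.
$$
For the second integral, H\"older's inequality in Orlicz spaces (Proposition~\ref{holder.mu} with the measure $\mu$) together with boundedness of $\{h(u_{k,+})\}$ in $L^{\widetilde H}_\mu(B)$ — which follows from $q^- H\le th(t)\le q^+ H$ controlling the complementary modular of $h(u_{k,+})$ by the $H$-modular of $u_{k,+}$ — and the strong convergence $u_k\to u$ in $L^H_\mu(B)$, shows this term vanishes. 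Hence
$$
\int_B \Big(g(|\nabla u_k|)\tfrac{\nabla u_k}{|\nabla u_k|} - g(|\nabla u|)\tfrac{\nabla u}{|\nabla u|}\Big)\cdot\nabla(u_k-u)\,dx \longrightarrow 0,
$$
after subtracting the term $\int_B g(|\nabla u|)\tfrac{\nabla u}{|\nabla u|}\cdot\nabla(u_k-u)\,dx$, which tends to $0$ by weak convergence $\nabla u_k\cd\nabla u$ in $L^G(B)$ (note $g(|\nabla u|)\nabla u/|\nabla u|\in L^{\widetilde G}(B)$). The vector field $\xi\mapsto g(|\xi|)\xi/|\xi|$ is strictly monotone, so this integrand is nonnegative and tends to $0$ in $L^1(B)$; a standard argument (passing to a subsequence so the integrand $\to0$ a.e., then using strict monotonicity pointwise) gives $\nabla u_k\to\nabla u$ a.e. in $B$. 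Finally, to upgrade a.e.\ convergence of gradients to strong convergence in $L^G(B)$, I would invoke the uniform convexity / $\Delta_2$ structure: since $\{G(|\nabla u_k|)\}$ and $G(|\nabla u|)$ are equi-integrable (the energies $\int_B G(|\nabla u_k|)\,dx$ converge — which one checks from $\langle J'(u_k),u_k\rangle\to0$ and $u_k\to u$ in $L^H_\mu$) and $\nabla u_k\to\nabla u$ a.e., Vitali's theorem gives $\int_B G(|\nabla u_k-\nabla u|)\,dx\to0$, equivalently $u_k\to u$ in $X_{rad}(B)$ by \eqref{fu.xi}. I expect the delicate bookkeeping to be in justifying the equi-integrability / convergence of the energies; everything else is the routine monotone-operator plus compactness machinery.
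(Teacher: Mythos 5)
Your proof is correct and follows the same overall structure as the paper (AR trick for boundedness, weak compactness, compact embedding into $L^H_\mu$, then monotonicity of $-\Delta_g$ to pass from $\langle J'(u_k),u_k-u\rangle\to 0$ to strong convergence of gradients), but it diverges from the paper in two sub-steps. First, to show $\int_B |x|^\alpha h((u_k)_+)(u_k-u)\,dx\to 0$, you use H\"older's inequality in $L^H_\mu\times L^{\widetilde H}_\mu$ together with the bound $\widetilde H(h(t))\le (q^+-1)H(t)$; the paper instead tries a one-line convexity estimate $\int h(u_k)(u_k-u)\,d\mu\le \int H(u_k)\,d\mu-\int H(u)\,d\mu$, which in fact has the sign reversed (convexity of $H$ gives $h(u_k)(u_k-u)\ge H(u_k)-H(u)$, a \emph{lower} bound), so your H\"older route is actually the cleaner and correctly signed one. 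Second, and more substantially, for the final upgrade from $\langle -\Delta_g u_k+\Delta_g u, u_k-u\rangle\to 0$ to $\|\nabla u_k-\nabla u\|_{L^G}\to 0$, you go through strict monotonicity $\Rightarrow$ a.e.\ convergence of $\nabla u_k$, then energy convergence and Vitali/Scheff\'e-type equi-integrability; the paper instead invokes a quantitative strong-monotonicity inequality (Lemma~3.1 of \cite{CSS}) that directly bounds $\int_B G(|\nabla u_k-\nabla u|)\,dx$ from above by the monotonicity pairing, making the conclusion immediate. Your route is more elementary in the sense of not relying on that specific lemma, but it does require carefully establishing that $u$ is itself a critical point (to get $\int g(|\nabla u|)|\nabla u|\,dx=\int |x|^\alpha h(u_+)u\,dx$ and hence convergence of the $tg(t)$-energies), which you flag but do not fully carry out; the paper's use of the CSS inequality sidesteps this bookkeeping entirely. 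Both approaches are legitimate; the paper's is shorter, yours is more self-contained.
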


\begin{proof}
Let us first prove that every Palais-Smale sequence $\{u_k\}_{k\in\N}\subset X_{rad}(B)$ for $J$ is bounded. Since $J'(u_k)\to 0$ and using \eqref{cota}
\begin{align}\label{cota1}
\begin{split}
|\langle J'(u_k),u_k\rangle| &=
\left| \int_B g(|\nabla u_k|)\frac{\nabla u_k}{|\nabla u_k|}\cdot \nabla u_k\,dx - \int_B h((u_k)_+) |x|^\alpha u_k\,dx \right|\\ 
&=
\left| \int_B g(|\nabla u_k|)|\nabla u_k|\,dx - \int_B h( (u_k)_+)|x|^\alpha u_k \,dx \right|\\
&\leq 
\|J'(u_k)\|_{W^{1,G}_0(B)'} \|u_k\|_{W^{1,G}_0(B)}\\
&\leq \|\nabla u_k\|_{L^G(B)}
\end{split}
\end{align}
for $k$ large enough. In particular we get that, for $k$ sufficiently large,
\begin{align*}
q^- \int_B |x|^{\alpha} H( (u_k)_+) \,dx &\leq \|\nabla u_k\|_{L^G(B)} + p^+ \int_B G(|\nabla u_k|)\,dx
\end{align*}
Condition $|J(u_k)|\leq C$ means that
\begin{equation} \label{cota2}
\left| \int_B G(|\nabla u_k|)\,dx - \int_B |x|^\alpha H((u_k)_{+}) \,dx\right| \leq C.
\end{equation}
Then, from  \eqref{cota1} and \eqref{cota2} we get that
\begin{align*}
\int_B G(|\nabla u_k|)\,dx  &\leq C +  \int_B |x|^\alpha H( (u_k)_{+}))\,dx\leq C+ \frac{1}{q^-} \|\nabla u_k\|_{L^G(B)}  +\frac{p^+}{q^-}  \int_B G(|\nabla u_k|)|\,dx,
\end{align*}
from where, since $p^+<q^-$, we obtain, using \eqref{fu.xi} that

$$
\left(1-\frac{p^+}{q^-}\right) \xi_{G,-}(\|\nabla u_k\|_{L^G(B)}) \leq \left(1-\frac{p^+}{q^-}\right) \int_B G(|\nabla u_k|)\,dx \leq C+ \frac{1}{q^-} \|\nabla u_k\|_{L^G(B)} . 
$$

Hence, $\{u_k\}_{k\in\N}$ is bounded in $X_{rad}(B)$. Up to a subsequence we have that $u_k \cd u$ weakly in $X_{rad}(B)$.

Let us prove that
$$
\lim_{k\to\infty} \int_B  h( (u_k)_+)|x|^\alpha(u_k-u)\,dx=0.
$$
Since  $H$ is convex, 
$$
\int_B h( u_k)|x|^\alpha (u_k-u)\,dx \leq \int_B |x|^\alpha H( u_k)\,dx -\int_B |x|^\alpha H( u)\,dx \to 0 
$$
as $k\to\infty$ since $u_k \to  u$ strongly in $L_{\mu}^H(B)$ due to Proposition \ref{prop.1}. This implies that, 
\begin{align*}
|\langle -\Delta_g (u_k), u_k-u\rangle| &= |\langle J'(u_k), u_k-u\rangle + \int_B h( (u_k)_+)|x|^\alpha(u_k-u)\,dx |\\
&\leq
\|J'(u_k)\|_{W^{1,G}_0(B)'} \|u_k-u\|_{W^{1,G}_0(B)} + \int_B h( (u_k)_+)|x|^\alpha(u_k-u)\,dx \to 0
\end{align*}
as $k\to\infty$. Observe that, since $u_k\cd u$ weakly in $X_{rad}(B)$ we have that
$$
|\langle -\Delta_g (u), u_k-u\rangle| \to 0 \text{ as } k\to\infty
$$
therefore, 
\begin{align} \label{cuenta}
|\langle -\Delta_g (u_k) +\Delta_g u , u_k-u\rangle| \to 0 \text{ as } k\to\infty.
\end{align}

Finally, let us see that $u_k\to u$ strongly in $X_{rad}(B)$. By \eqref{cuenta} and  Lemma 3.1 in \cite{CSS} we obtain that
\begin{align*}
0\gets |\langle -\Delta_g (u_k) +\Delta_g (u), u_k-u\rangle| &= \left|  \int_B \left(g(|\nabla u_k|)\frac{\nabla u_k}{|\nabla u_k|} - g(|\nabla u|)\frac{\nabla u}{|\nabla u|}  \right)\cdot (\nabla u_k - \nabla u) \,dx \right|\\
&\geq C\int_B G(|\nabla u_k-\nabla u|)\,dx
\end{align*}
which concludes the proof.
\end{proof}

\subsection{Mountain-pass geometry of $J$}
Let us see now that $J$ satisfies the mountain-pass Theorem hypothesis (see Proposition \ref{mountain}).

\begin{lemma} \label{lema.2}Assume \eqref{cond.intro} and \eqref{cond p<q.intro}. If $H\ll R$ at infinity, with $R$ satisfying \eqref{cond.intro on H}, then the functional  $J$ satisfies the conditions:
\begin{itemize}
\item[(i)] $J(0)=0$ and $J(v)\leq 0$ for some $v\neq 0$ in $X_{rad}(B)$,
\item[(ii)] there exists $\alpha \in (0,\|\nabla v\|_{L^G(B)})$ and $\sigma>0$ such that $J\geq \sigma$ on
$$
S_\alpha :=\{u\in X_{rad}(B) \colon \|\nabla u\|_{L^G(B)} = \alpha\}.
$$
\end{itemize}
\end{lemma}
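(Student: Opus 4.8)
The plan is to verify the two conditions of the mountain-pass geometry by exploiting the gap $p^+ < q^-$ between the growth of $G$ and that of $H$. For condition (ii), I would start from the definition
$$
J(u)= \int_B G(|\nabla u|)\,dx -  \int_B |x|^\alpha H( u_{+})\,dx,
$$
and estimate the two terms separately on a small sphere $\{\|\nabla u\|_{L^G(B)}=\alpha\}$. For the gradient term, the norm-modular inequality \eqref{fu.xi} gives $\int_B G(|\nabla u|)\,dx \geq \xi_{G,-}(\alpha)=\alpha^{p^+}$ once $\alpha<1$. For the zero-order term, I would combine the Strauss-type estimate of Proposition \ref{strauss.nuevo} with assumption \eqref{cond.intro on H} exactly as in \eqref{cota}, obtaining
$$
\int_B |x|^\alpha H(u_+)\,dx \leq C\,\xi_{H,+}(\|\nabla u\|_{L^G(B)}) = C\,\xi_{H,+}(\alpha),
$$
and then use that $R$ (hence $H$, since $H\ll R$) grows faster than any power between $q^-$ and $q^+$; in particular, for $\alpha<1$, $\xi_{H,+}(\alpha)\le \alpha^{q^-}$ up to a constant. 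Since $q^->p^+$, we get
$$
J(u) \geq \alpha^{p^+} - C\alpha^{q^-} = \alpha^{p^+}\bigl(1 - C\alpha^{q^- - p^+}\bigr),
$$
which is bounded below by some $\sigma>0$ once $\alpha$ is chosen small enough. I would need to be a little careful about what exactly $\xi_{H,+}$ is when $H$ is not a pure power, but the essential point — that $\int_B |x|^\alpha H(u_+)\,dx$ is controlled by a power of $\|\nabla u\|_{L^G(B)}$ strictly larger than $p^+$ — follows from \eqref{cond p<q.intro} together with \eqref{cond.intro}.

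For condition (i), the fact that $J(0)=0$ is immediate. To find $v\neq 0$ with $J(v)\leq 0$, I would fix any nonzero, nonnegative $w\in X_{rad}(B)$ and test the functional along the ray $t\mapsto J(tw)$ for $t>0$ large. Using \eqref{fu.xi} again,
$$
\int_B G(|\nabla (tw)|)\,dx \leq \xi_{G,+}(t\|\nabla w\|_{L^G(B)}) \leq C\, t^{p^+}
$$
for $t\geq 1$, whereas for the nonlinear term I would use the lower bound coming from the left inequality in \eqref{cond.intro}, namely $q^- H(s)\le s h(s)$, which yields $H(ts)\ge t^{q^-}H(s)$ for $t\ge 1$, so that
$$
\int_B |x|^\alpha H(tw)\,dx \geq t^{q^-}\int_B |x|^\alpha H(w)\,dx.
$$
Since $\int_B |x|^\alpha H(w)\,dx>0$ (as $w\not\equiv 0$ and $|x|^\alpha>0$ a.e.), and $q^->p^+$, we obtain
$$
J(tw) \leq C t^{p^+} - c\, t^{q^-} \to -\infty \quad\text{as } t\to\infty,
$$
so $v:=tw$ works for $t$ large; in particular one can arrange $\|\nabla v\|_{L^G(B)}>\alpha$, which is what (ii) requires.

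The main obstacle I anticipate is purely bookkeeping rather than conceptual: making the comparison between the Luxemburg-norm functionals $\xi_{G,\pm}$, $\xi_{H,\pm}$ and genuine powers precise enough to get a clean $\alpha^{p^+}(1-C\alpha^{q^--p^+})$ bound, and in particular confirming that the constant $C$ in \eqref{cota} does not secretly depend on $\|\nabla u\|_{L^G(B)}$ in a way that spoils the smallness argument. Once the scaling inequalities $G(ts)\le t^{p^+}G(s)$ and $H(ts)\ge t^{q^-}H(s)$ for $t\ge 1$ (and their reverses for $t\le 1$), which are standard consequences of \eqref{cond.intro}, are recorded, both (i) and (ii) reduce to the elementary fact that $t^{q^-}$ dominates $t^{p^+}$ at infinity and is dominated by it near $0$.
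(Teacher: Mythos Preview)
Your proposal is correct and follows essentially the same approach as the paper's own proof: for (ii) both use the Strauss-type bound \eqref{cota} together with \eqref{fu.xi} to compare $\alpha^{p^+}$ against $C\alpha^{q^-}$ on a small sphere, and for (i) both scale a fixed nonnegative radial function along the ray $t\mapsto J(tw)$ and exploit $G(ts)\le t^{p^+}G(s)$ versus $H(ts)\ge t^{q^-}H(s)$ for $t\ge 1$. The concern you raise about the constant in \eqref{cota} is unfounded---that constant comes from Proposition~\ref{strauss.nuevo} and the fixed integral in \eqref{cond.intro on H}, so it is independent of $u$.
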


\begin{proof}
Let us check item (i). By definition, $J(0)=0$.  Let $u_0\in X_{rad}(B)$ be such that $u_0>0$ and $\|\nabla u_0\|=1$. Then $\int_B G(|\nabla u_0|)\,dx \leq 1$ and then, since  $q^->p^+$, 
\begin{align*}
J(tu_0)&= \int_B G(t|\nabla u_0|)\,dx - 	 \int_B |x|^\alpha H(t (u_0)_{+})\,dx\\
&\leq 
t^{p^+} - t^{q^-} \int_B |x|^\alpha H( (u_0)_{+})\,dx \leq 0
\end{align*}
for $t>1$ large enough.

Let us   prove item (ii). First,
\begin{align*} 
\begin{split}
\int_B |x|^\alpha H( u )\,dx &\leq  
C \xi_{H,+}(\|\nabla u\|_{L^G(B)})) = C \|\nabla u\|_{L^G(B)}^{q^-}
\end{split}
\end{align*}
when $\|\nabla u\|_{L^G(B)} \leq 1$. Finally, since $q^->p^+$ we get that
\begin{align*}
J(u)&= \int_B G(|\nabla u|)\,dx -  \int_B |x|^\alpha H( u_{+})\,dx\\
&\geq
\|\nabla u\|_{L^G(B)}^{p^+} - C \|\nabla u\|_{L^G(B)}^{q^-} =  \alpha^{p^+}-C \alpha^{q^-}:=\sigma >0
\end{align*}
if $\|\nabla u\|_{L^G(B)} =\alpha $ with $\alpha>0$ small enough.
\end{proof}

We are position to prove our first existence result.

\begin{proof}[Proof of Theorem \ref{teo1}]
The proof is a consequence of the Mountain Pass Theorem stated in Proposition \ref{mountain} by applying Lemmas \ref{lema.1} and \ref{lema.2}.
\end{proof}

\begin{remark}
We point out that the critical function $H=G^*$ is admissible in condition \eqref{cond.intro on H}.

Let $\widetilde G$ be the complementary function to $G$, and denote $\widetilde G'=\widetilde g$. Since \eqref{cond.intro} implies that
$$
(p^+)' \leq \frac{t \widetilde g(t)}{\widetilde G(t)}\leq (p^-)',
$$
by Lemma 2.7 in \cite{FBPLS} we have the following space inclusion
$$
L^{(p^-)'}((r,1), s^{n-1}\,ds) \subset L^{\widetilde G}((r,1), s^{n-1}\,ds).
$$
Moreover, by Remark 3.2 in \cite{FBSi} the critical function $G^*$ has the following growth behavior
$$
G^*(t) \leq C_1 + C_2 t^{(p^+)^*}
$$
where $C_1,C_2>0$ are constants, 
$(p^-)' = \frac{p^-}{p^--1}$ and  $(p^+)^* = \frac{np^+}{n-p^+}$. Then, assuming $p^-<n$ we get that
\begin{align*}
\|s^{1-n}\|_{L^{\widetilde G}(r,1),s^{n-1}ds} \leq \|s^{1-n}\|_{L^q((r,1),s^{n-1}ds)} =\left(\int_r^1 s^{\frac{p^--n}{p^--1}-1} \,ds\right)^\frac{p^--1}{p^-} \leq C r^{1-\frac{n}{p^-}}.
\end{align*}
Hence, since the N-function $G^*$ is increasing
\begin{align*}
\int_0^1 &r^{\alpha+n-1} G^*( \|s^{1-n}\|_{L^{\widetilde G}(r,1),s^{n-1}ds} ) \,dr \leq \int_0^1 r^{\alpha+n-1}\left(C_1 + C_2 \|s^{1-n}\|_{L^{\widetilde G}(r,1),s^{n-1}ds}^{(p^+)^*}\right)\,dr\\
&\leq
C_1\int_0^1 r^{\alpha+n-1}\,dr + C_2 \int_0^1 r^{\alpha+n-1+(1-\frac{n}{p^-})(p^+)^*}\,dr 
\end{align*}
which is finite when
$$
\alpha+n+(1-\frac{n}{p^-})(p^+)^*>0 \quad \text{ that is, when }  \alpha   >n\left( \frac{n-p^-}{n-p^+} \frac{p^+}{p^-}-1  \right) .
$$

\end{remark}

\section{Boundedness of solutions} \label{sec.bound}
 
In this section, we prove that any radial solution 
u of problem \eqref{Henon eq} is bounded. Our argument begins with the following technical lemma concerning the composition of M-functions.

\begin{lemma}\label{F function}Let $R$ and $H$ be N-functions satisfying \eqref{cond.intro}, where $r^{\pm}>1$ are the corresponding coefficients for $R$. Assume moreover that $q^{+}< r^{-}$. Then, the composition  $F:=R\circ H^{-1}$ is an N-function. 
\end{lemma}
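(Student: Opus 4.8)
The plan is to verify directly that $F:=R\circ H^{-1}$ satisfies Definition \ref{d2.1}, i.e.\ that it admits a representation $F(t)=\int_0^t f(\tau)\,d\tau$ with $f$ right-continuous, positive for $t>0$, non-decreasing, and with $f(0)=0$, $f(\infty)=\infty$. Since $H$ is an N-function with $H'=h$ satisfying \eqref{cond.intro}, $h$ is (locally absolutely) continuous and strictly positive on $(0,\infty)$, so $H$ is a $C^1$ strictly increasing bijection of $[0,\infty)$ onto itself; hence $H^{-1}$ is well-defined, strictly increasing, $C^1$ on $(0,\infty)$, with $H^{-1}(0)=0$ and $(H^{-1})'(s)=1/h(H^{-1}(s))$. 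Likewise $R=\int_0^\cdot R'$ with $R'$ as in \eqref{cond.intro}. Therefore $F$ is $C^1$ on $(0,\infty)$, $F(0)=0$, and by the chain rule
\[
f(t):=F'(t)=R'(H^{-1}(t))\,(H^{-1})'(t)=\frac{R'(H^{-1}(t))}{h(H^{-1}(t))},\qquad t>0,
\]
with $f(0):=0$. Positivity of $f$ on $(0,\infty)$ and right-continuity are immediate from the corresponding properties of $R'$, $h$ and $H^{-1}$, and $F(t)=\int_0^t f(\tau)\,d\tau$ holds by the fundamental theorem of calculus. So the only two substantive points are: (a) $f$ is non-decreasing (equivalently $F$ is convex), and (b) $f(\infty)=\infty$.

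The main obstacle is point (a): neither $R'(H^{-1}(t))$ nor $1/h(H^{-1}(t))$ is individually monotone in the right direction, so the product must be controlled. Here is where the hypothesis $q^+<r^-$ enters. Write $s=H^{-1}(t)$, so $t=H(s)$ and, using \eqref{cond.intro} for $H$ and $R$,
\[
f(t)=\frac{R'(s)}{h(s)}=\frac{s\,R'(s)}{s\,h(s)}\cdot\frac{1}{1}\ \ge\ \frac{r^- R(s)}{q^+ H(s)}=\frac{r^-}{q^+}\cdot\frac{R(s)}{t},\qquad\text{and similarly}\qquad f(t)\le \frac{r^+}{q^-}\cdot\frac{R(s)}{t}.
\]
Thus $tf(t)$ is comparable to $R(H^{-1}(t))=F(t)$; more precisely the two-sided bound
\[
\frac{r^-}{q^+}\,F(t)\ \le\ t f(t)\ \le\ \frac{r^+}{q^-}\,F(t),\qquad t>0,
\]
holds. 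Now I invoke the characterization recalled after the $\Delta_2$-definition (\cite[Theorem 4.1 and Theorem 4.3, Chapter 1]{KR}): a function $F=\int_0^\cdot f$ with $f$ right-continuous, positive on $(0,\infty)$ and $f(0)=0$, $f(\infty)=\infty$ which additionally satisfies $p^- F(t)\le tf(t)\le p^+F(t)$ for some $1<p^-\le p^+<\infty$ is automatically an N-function; equivalently, one shows $f$ non-decreasing from this inequality. Concretely, from $tf(t)\ge (r^-/q^+)F(t)$ with $r^-/q^+>1$ one derives that $t\mapsto F(t)/t^{\,r^-/q^+}$ is non-decreasing, while $tf(t)\le (r^+/q^-)F(t)$ gives that $t\mapsto F(t)/t^{\,r^+/q^-}$ is non-increasing; standard Orlicz arguments (as in the cited theorems of \cite{KR}) then upgrade the doubling/power bounds on $F$ to convexity of $F$, i.e.\ monotonicity of $f$.

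It remains to dispose of point (b), $f(\infty)=\infty$, and to double-check $f$ is finite for each $t$ (so that $F$ is genuinely an N-function and not merely convex). Finiteness of $f(t)$ for $t>0$ is clear since $h(H^{-1}(t))>0$. For $f(\infty)=\infty$: from $tf(t)\ge (r^-/q^+)F(t)=(r^-/q^+)R(H^{-1}(t))$ and the super-linearity of $R$ at infinity (property (ii) of N-functions, $R(s)/s\to\infty$), together with $H^{-1}(t)\to\infty$ as $t\to\infty$, one gets that $F(t)/t\to\infty$, and then $f(t)\ge F(t)/t$ (by convexity of $F$, i.e.\ $F(t)=\int_0^t f\le t f(t)$, or directly from the lower bound above once $r^-/q^+>1$) forces $f(t)\to\infty$. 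Assembling (a), (b) and the elementary properties listed first, $F=R\circ H^{-1}$ meets every requirement of Definition \ref{d2.1}, so it is an N-function, completing the proof.
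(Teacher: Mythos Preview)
Your reduction to checking the properties of $f(t)=R'(H^{-1}(t))/h(H^{-1}(t))$ is sound, and the two-sided estimate $\tfrac{r^-}{q^+}\,F(t)\le tf(t)\le \tfrac{r^+}{q^-}\,F(t)$ together with $f(0^+)=0$ and $f(\infty)=\infty$ is correctly derived. The gap is in step~(a). The bounds $c_1F\le tF'\le c_2F$ (with $1<c_1\le c_2$) are \emph{equivalent} to the monotonicity of $t\mapsto F(t)/t^{c_1}$ and $t\mapsto F(t)/t^{c_2}$, but they do \emph{not} force $F'$ to be non-decreasing. The theorems you invoke from \cite{KR} run in the opposite direction: they start from an N-function (hence already convex) and characterise the $\Delta_2$-condition via such inequalities; they do not manufacture convexity from them. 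A concrete obstruction: take $H(s)=s^2$ (so $q^\pm=2$) and an N-function $R$ with $R'(s)=4s^3$ on $[0,1]$, $R'\equiv 4$ on $[1,9/8]$, extended so that $3\le sR'(s)/R(s)\le 4$ for all $s>0$. Then $q^+=2<3=r^-$, yet on $t\in[1,(9/8)^2]$ one has $f(t)=R'(\sqrt t)/(2\sqrt t)=2/\sqrt t$, which is strictly decreasing, so $F$ fails to be convex there. Thus the ``standard Orlicz arguments'' you appeal to cannot close the gap.

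The paper itself does not argue this point but defers to \cite[Lemma~8]{FSV}. A route that does work is to show directly that $s\mapsto R'(s)/h(s)$ is non-decreasing, which needs pointwise control on $R''/R'$ and $h'/h$ rather than on $R'/R$ and $h/H$. Under the differentiated hypotheses $(r^--1)R'(s)\le sR''(s)$ and $sh'(s)\le(q^+-1)h(s)$ (compare the assumption on $g$ in Theorem~\ref{Poho}), one gets $R''(s)/R'(s)\ge (r^--1)/s>(q^+-1)/s\ge h'(s)/h(s)$, and then $f'(t)=\bigl[R''(s)h(s)-R'(s)h'(s)\bigr]/h(s)^3\ge 0$ gives convexity of $F$ at once. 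With only the integrated growth condition \eqref{cond.intro}, however, your step~(a) cannot be completed as written.
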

\begin{proof}
The proof follows exactly the lines of \cite[Lemma 8]{FSV} with $R$ replacing $G^{*}$ in that argument.
\end{proof}

\begin{proof}[Proof of Theorem \ref{teo_boundedness}] In what follows, we will apply a De Giorgi's iteration scheme to control the level sets of a solution $u\in X_{rad}(B)$ to problem \eqref{Henon eq}.

For a positive integer $k$, define
$$w_{k}:= (u-(1-2^{-k}))_{+}.$$Then, as in \cite{FP}, the following holds
\begin{equation}\label{property w}
w_{k+1} \leq w_k \text{ in }B, \quad u(x)< (2^{k+1}-1)w_k \text{ in }\left\lbrace w_{k+1}>0\right\rbrace, \quad \text{and }\left\lbrace w_{k+1}>0\right\rbrace \subset \left\lbrace w_{k}>2^{-(k+1)}\right\rbrace. 
\end{equation} Also, $0 \leq w_k \leq |u|+1 \in L^1(B)$ and $w_k(x)\to (u(x)-1)_{+}$ a.e. in $B$, so by dominated convergence theorem,
\begin{equation}\label{convergence to u}
\lim_{k\to \infty}\int_B |x|^{\alpha}H(w_k)\,dx = \int_B |x|^\alpha H((u-1)_{+})\,dx.
\end{equation}
Now, since $u$ is a solution of \eqref{Henon eq} and appealing to \eqref{cond.intro}, we obtain
\begin{equation}\label{ineq b 1}
\begin{split}
\int_B G(|\nabla w_{k+1}|)\,dx & \leq \dfrac{1}{p^{-}}\int_B \dfrac{g(|\nabla w_{k+1}|)}{|\nabla w_{k+1}|}\nabla w_{k+1}\cdot \nabla w_{k+1}\,dx \\ & =  \dfrac{1}{p^{-}} \int_B \dfrac{g(|\nabla u|)}{|\nabla u|}\nabla u\cdot \nabla w_{k+1}\,dx = \dfrac{1}{p^{-}}\int_B |x|^{\alpha}h(u)w_{k+1}\,dx,
\end{split}
\end{equation}where we have tested with $w_{k+1}$. By \eqref{property w},
$$\int_B |x|^{\alpha}h(u)w_{k+1}\,dx \leq \int_B |x|^{\alpha}h\left((2^{k+1}-1)w_k\right)w_{k}\,dx \leq q^{+}(2^{k+1}-1)^{q^{+}-1}\int_B |x|^\alpha H(w_k)\,dx.$$
Thus, from \eqref{ineq b 1}, we obtain
\begin{equation}\label{ineq 100}
\int_B G(|\nabla w_{k+1}|)\,dx \leq \dfrac{q^{+}}{p^{-}}(2^{k+1}-1)^{q^{+}-1}\int_B |x|^\alpha H(w_k)\,dx = C^{(1)}_{k+1}\int_B |x|^\alpha H(w_k)\,dx.
\end{equation}

On the other hand, by Lemma \ref{F function} and H\"{o}lder's inequality, and letting $d\mu =|x|^{\alpha}\,dx$, we get
\begin{equation}\label{bound H and chi}
\int_B |x|^\alpha H(w_{k+1})\,dx \leq 2\|H(w_{k+1})\|_{L_{\mu}^F(B)}\|\chi_{w_{k+1}>0}\|_{L_\mu^{\tilde{F}}(B)},
\end{equation}where $F$ is the N-function $F=R\circ H^{-1}$. By \cite[Corollary 7]{RR}, we obtain
\begin{equation}\label{bound chi}
\begin{split}
\|\chi_{w_{k+1}>0}\|_{L_\mu^{\tilde{F}}(B)}&  = \mu(\{w_{k+1}>0\})(H\circ R^{-1})\left(\dfrac{1}{\mu(\{w_{k+1}>0\})} \right) \\ & \leq \mu(\{w_{k+1}>0\}) \psi_1 \left( \dfrac{1}{\mu(\{w_{k+1}>0\})} \right),
\end{split}
\end{equation}
where for $t\geq 0$ the function $\psi_1(t)$ is defined as $\psi_1(t):=\max\{t^\frac{q^+}{r^+}, t^\frac{q^+}{r^-}, t^\frac{q^-}{r^-}, t^\frac{q^-}{r^-}\}$.

By the assumption $q^{+}<r^{-}$, all the exponents satisfy 
$$\frac{q^{+}}{r^{+}}, \quad \frac{q^{-}}{r^{+}}, \quad \frac{q^{-}}{r^{-}}, \quad \frac{q^+}{r^-} <1.$$Hence, there exists $\delta>0$, which could be equal to $1-q^{\pm}/r^{\pm}$ according to the size of the level set $\{w_{k+1}>0\}$ such that
$$\|\chi_{w_{k+1}>0}\|_{L_\mu^{\tilde{F}}(B)} \leq \mu(\{w_{k+1}>0\})^{\delta}.$$As a result, appealing to \eqref{property w}, if we denote $C_{k+1}^{(2)}:=2^{\delta q^{+}(k+1)}$, we get that
\begin{equation}\label{ineq norm 1}
\begin{split}
\|\chi_{w_{k+1}>0}\|_{L_\mu^{\tilde{F}}(B)} & \leq \left(\int_{\{w_{k+1}>0\}}|x|^\alpha \,dx  \right)^{\delta}  = \left(\int_{\{w_{k+1}>0\}}\dfrac{H(2^{-(k+1)})}{H(2^{-(k+1)})} |x|^\alpha \,dx  \right)^{\delta} \\& \leq C_{k+1}^{(2)} \left(\int_{\{w_{k+1}>0\}}H(w_k) |x|^\alpha \,dx  \right)^{\delta}.
\end{split} 
\end{equation}

 By \cite[Lemma 10]{FSV} with $G^{*}$ replaced by $R$, the embedding in Proposition \ref{prop.1} and \eqref{ineq 100},
\begin{equation}\label{bound with max}
\begin{split}
\|H(w_{k+1})\|_{L_{\mu}^F(B)} & \leq \max\left\lbrace \|w_{k+1}\|_{L^R_\mu(B)}^{q^{+}},\|w_{k+1}\|_{L^R_\mu(B)}^{q^{-}}\right\rbrace \\& \leq C\max\left\lbrace \|\nabla w_{k+1}\|_{L^G(B)}^{q^{+}},\|\nabla w_{k+1}\|_{L^G(B)}^{q^{-}}\right\rbrace \\ & \leq C \psi_2\left(\int_B G(|\nabla w_{k+1}|)\,dx \right)\\
& \leq C^{(3)}_{k+1} \psi_2\left(\int_B |x|^\alpha H(w_k)\,dx\right),
\end{split}
\end{equation}where $C^{(3)}_{k+1}\geq C^{i}_{k+1}$ for $i=1, 2$, and  $\psi_2(t):=\max\{t^\frac{q^+}{p^+}, t^\frac{q^+}{p^-}, t^\frac{q^-}{p^-}, t^\frac{q^-}{p^-}\}$.  Next, suppose that
\begin{equation}
\int_B |x|^\alpha H(w_0)\,dx < 1.
\end{equation}Then, since $\left\lbrace w_{k} \right\rbrace_k $ is decreasing, it follows that
\begin{equation}\label{extra assump}
\int_B |x|^\alpha H(w_k)\,dx<1,
\end{equation}for all $k$. Hence, combining  \eqref{bound H and chi} with the bounds \eqref{ineq norm 1} and \eqref{bound with max}, using \eqref{extra assump} with the assumption $p^{+}<q^{-}$, we derive
 \begin{equation}
 \int_B H(w_{k+1})|x|^{\alpha}\,dx \leq C_{k+1} \left(\int_B H(w_{k+1})|x|^{\alpha}\,dx\right)\left( \int_B H(w_{k+1})|x|^{\alpha}\,dx\right)^{\delta+\gamma},
 \end{equation}with
 $$\gamma = \min\left\lbrace \dfrac{q^{+}}{p^{+}},\dfrac{q^{+}}{p^{.}}, \dfrac{q^{-}}{p^{+}}, \dfrac{q^{-}}{p^{-}}\right\rbrace -1>0,$$and where $C_{k+1}\geq C^{(3)}_{k+1}$. Therefore, by the numerical lemma \cite[Lemma 13]{FSV}, we obtain that there is $\varepsilon\in (0, 1)$ such that if
 \begin{equation}\label{assumption with eps}
 \int_B H(w_0)|x|^\alpha\,dx<\varepsilon
 \end{equation}then
 $$\lim_{k\to \infty} \int_B H(w_{k})|x|^{\alpha}\,dx =0.$$Observe that 
 $$\int_B H(w_0)|x|^\alpha\,dx=\int_B H(u_{+})|x|^\alpha\,dx$$and moreover \eqref{convergence to u} implies  $\|u\|_{L^{\infty}(B)}\leq 1$. Hence, the result is valid under the assumption
 $$\int_B H(u_{+})|x|^\alpha\,dx<\varepsilon,$$which in particular implies \eqref{extra assump}. If now
 $\xi=\int_B H(u)|x|^{\alpha}\,dx>0$ is arbitrary, then we choose  $C>1$ large enough so that
 \begin{equation}
 \int_B H\left(\dfrac{u}{C}\right)|x|^{\alpha}\,dx \leq C^{-q^{-}}\int_B H(u)|x|^{\alpha}\,dx<\varepsilon.
 \end{equation}Moreover, $u/C$ also satisfies \eqref{ineq b 1} with an extra constant at the end. Hence, from the above argument 
 $$\|u/C\|_{L^{\infty}(B)}\leq 1 \text{ or }\|u\|_{L^{\infty}(B)}\leq C.$$This ends the proof of the theorem.
\end{proof}

\section{A non-existence result for the H\'enon problem } \label{sec.no.exist}

We start this section by proving the Pohozaev's type  identity given in Proposition \ref{Poho}.  
\begin{proof}[Proof of Theorem \ref{Poho}]
First, by \cite[Theorem 1.7]{L} and subsequent remarks, we conclude that $u\in C^{1, \alpha}(\overline{B})$, for some $\alpha\in (0, 1)$. Hence, $f(x, u(x))$ is continuous and vanishes on $\partial B$. We now regularized the problem.

For simplicity in the notation, we will denote $G(t):=\int_0^t a(s)s\,ds$.  Moreover, from the relation
$$
\dfrac{a'(t)t}{a(t)}= \dfrac{tg(t)+t^2g'(t)}{tg(t)}=1+\dfrac{tg'(t)}{g(t)}
$$
we have that
$$
-1<p^{-}-2\leq \dfrac{a'(t)t}{a(t)} \leq p^{+}-2<\infty.
$$
Let $\{f_\varepsilon\}_{\varepsilon>0} \subset C_0^{2}(B)$ be a sequence converging uniformly to $f(x, u(x))$ in $\overline{B}$. Consider the problem 
 \begin{equation}\label{auxiliary problem} 
\begin{cases}
-\diver(a(\sqrt{\varepsilon+|\nabla u|^2})\nabla u)= f_\varepsilon \quad \text{in }B\\
u=0 \quad \text{on }\partial B.
\end{cases}
\end{equation}As in the beginning of the proof of Theorem 1.1 in \cite{CM}, the solution $u_\varepsilon \in C^3(\overline{B})$ and moreover, there is a constant $C=C(p^{-}, p^{+}, n, B, \|f\|_{\infty})>0$ such that
$$\|u_\varepsilon\|_{C^{1, \alpha}(\overline{B})}\leq C.$$Hence, $u_\varepsilon \to u$ in $C^{1, \alpha'}(\overline{B})$ for $\alpha'<\alpha$. 
Let $P=a(\sqrt{\varepsilon+|\nabla u_\varepsilon|^2})(\nabla u\cdot x)\nabla u_\varepsilon$. The divergence theorem implies that
\begin{equation}\label{divergence theorem}
\int_B \text{div } P \,dx=\int_{\partial B} (P\cdot \nu)\,d\sigma.
\end{equation}Firstly, observe that
\begin{equation}\label{int2}
\int_{\partial B} (P\cdot \nu)\,d\sigma =\int_{\partial B}a(\sqrt{\varepsilon+|\nabla u_\varepsilon|^2})(\nabla u_{\varepsilon}\cdot x)(\nabla u_\varepsilon \cdot \nu)\,d\sigma.
\end{equation}On the other hand,
\begin{equation}
\begin{split}
\text{div } P&= (\nabla u_\varepsilon \cdot x)\diver\left( a(\sqrt{\varepsilon+|\nabla u_\varepsilon|^2})\nabla u_{\varepsilon}\right) + a(\sqrt{\varepsilon+|\nabla u_\varepsilon|^2})\nabla u_{\varepsilon} \cdot \nabla (\nabla u_\varepsilon \cdot x)\\& =(\nabla u_\varepsilon \cdot x)\diver\left( a(\sqrt{\varepsilon+|\nabla u_\varepsilon|^2})\nabla u_{\varepsilon}\right) + a(\sqrt{\varepsilon+|\nabla u_\varepsilon|^2})\left(|\nabla u_\varepsilon|^2+ \dfrac{1}{2}x\cdot \nabla(|\nabla u_\varepsilon|^2) \right).
\end{split}
\end{equation}Hence, 
\begin{equation}\label{int0}
\begin{split}
\int_B \text{div } P\,dx & = \int_B (\nabla u_\varepsilon \cdot x)\diver\left( a(\sqrt{\varepsilon+|\nabla u_\varepsilon|^2})\nabla u_{\varepsilon}\right) \,dx + \int_B  a(\sqrt{\varepsilon+|\nabla u_\varepsilon|^2})|\nabla u_\varepsilon|^2\,dx\\ & \quad + \dfrac{1}{2}\int_B  a(\sqrt{\varepsilon+|\nabla u_\varepsilon|^2}) x\cdot \nabla (|\nabla u_\varepsilon|^2)\,dx \\& = -\int_B (\nabla u_\varepsilon \cdot x)f_\varepsilon \,dx + \int_B u_\varepsilon f_\varepsilon\,dx + \dfrac{1}{2}\int_B x\cdot \nabla \left(\int_0^{|\nabla u_\varepsilon|^2}a(\sqrt{\varepsilon + s})\,ds \right)\,dx,
\end{split}
\end{equation}where we have used that $u_\varepsilon$ is a solution of problem \eqref{auxiliary problem}. Now, by integration by parts,
\begin{equation}\label{in1}
\begin{split}
\int_B x\cdot \nabla \left(\int_0^{|\nabla u_\varepsilon|^2}a(\sqrt{\varepsilon + s})\,ds \right)\,dx & = -n\int_B\left(\int_0^{|\nabla u_\varepsilon|^2}a(\sqrt{\varepsilon + s})\,ds \right)\,dx \\& +\int_{\partial B}(x\cdot \nu) \left(\int_0^{|\nabla u_\varepsilon|^2}a(\sqrt{\varepsilon + s})\,ds \right)\,d\sigma.
\end{split}
\end{equation}As $u_\varepsilon=0$ on $\partial B$, we have that $\nabla u_\varepsilon=(\nabla u_\varepsilon \cdot \nu)\nu$ on $\partial B$. 

As a result of \eqref{int0}, \eqref{in1} and \eqref{int2}, we get
\begin{equation}\label{first P}
\begin{split}
&-\int_B (\nabla u_\varepsilon \cdot x)f_\varepsilon \,dx + \int_B u_\varepsilon f_\varepsilon\,dx  - \dfrac{n}{2}\int_B\left(\int_0^{|\nabla u_\varepsilon|^2}a(\sqrt{\varepsilon + s})\,ds \right)dx \\ & \qquad  = -\dfrac{1}{2}\int_{\partial B}(x\cdot \nu) \left(\int_0^{|\nabla u_\varepsilon|^2}a(\sqrt{\varepsilon + s})\,ds \right)\,d\sigma +\int_{\partial B}a(\sqrt{\varepsilon+|\nabla u_\varepsilon|^2})(\nabla u_{\varepsilon}\cdot x)(\nabla u_\varepsilon \cdot \nu)d\sigma \\ & \qquad = -\dfrac{1}{2}\int_{\partial B}(x\cdot \nu) \left(\int_0^{|\nabla u_\varepsilon|^2}a(\sqrt{\varepsilon + s})\,ds \right)\,d\sigma +\int_{\partial B}a(\sqrt{\varepsilon+|\nabla u_\varepsilon|^2})|\nabla u_\varepsilon|^2  ( x\cdot \nu)\,d\sigma \\ & \qquad =\int_{\partial B}(x\cdot \nu) \left[a(\sqrt{\varepsilon+|\nabla u_\varepsilon|^2})|\nabla u_\varepsilon|^2-\dfrac{1}{2} \left(\int_0^{|\nabla u_\varepsilon|^2}a(\sqrt{\varepsilon + s})\,ds \right)\right]\,d\sigma
\end{split}
\end{equation}To treat the integral
$$\int_0^{|\nabla u_\varepsilon|^2}a(\sqrt{\varepsilon + s})\,ds$$for each $x$ fixed, we perform the change of variables
$t=\sqrt{\varepsilon+s}$, which gives $2t\,dt= ds$ and we obtain
$$ \int_0^{|\nabla u_\varepsilon|^2}a(\sqrt{\varepsilon + s})\,ds= 2\int_{\sqrt{\varepsilon}}^{\sqrt{\varepsilon +|\nabla u_\varepsilon|^2}}a(t)t\,dt= 2 \left( G(\sqrt{\varepsilon +|\nabla u_\varepsilon|^2})-G(\sqrt{\varepsilon})\right).
$$
Hence, we have from \eqref{first P},
\begin{equation}\label{second P}
\begin{split}
&-\int_B (\nabla u_\varepsilon \cdot x)f_\varepsilon \,dx + \int_B u_\varepsilon f_\varepsilon\,dx-n\int_B\left( G(\sqrt{\varepsilon +|\nabla u_\varepsilon|^2})-G(\sqrt{\varepsilon})\right)\,dx \\ & \quad= \int_{\partial B}(x\cdot \nu) \left[a(\sqrt{\varepsilon+|\nabla u_\varepsilon|^2})|\nabla u_\varepsilon|^2-\left( G(\sqrt{\varepsilon +|\nabla u_\varepsilon|^2})-G(\sqrt{\varepsilon})\right)\right] d\sigma.
\end{split}
\end{equation}
As $\varepsilon\to 0^{+}$, we obtain from \eqref{second P},
\begin{align}\label{third P}
\begin{split}
-\int_B (\nabla u \cdot x)f(x, u) \,dx &+ \int_B u f(x, u)\,dx  - n\int_B G(|\nabla u|)\,dx=\\
&=\int_{\partial B}(x\cdot \nu) \left[a(|\nabla u|)|\nabla u|^2- G(|\nabla u|)\right] d\sigma.
\end{split}
\end{align}
Finally, observe that from \eqref{third P} we obtain \eqref{Pohozaev indentity} since
$$\int_B u f(x, u)\,dx = \int_B a(|\nabla u|)|\nabla u|^2\,dx$$and by integration by parts
\begin{align*}
\int_B (\nabla u \cdot x)f(x, u) \,dx&=\int_B x\cdot \nabla_x F(x, u)\,dx - \int_B x\cdot \nabla_x F(\cdot, u)\,dx\\ 
&= -n\int_B F(x, u)\,dx - \int_B x\cdot \nabla_x F(\cdot, u)\,dx,
\end{align*}
where
$\nabla_x F(\cdot, u)$ denotes the gradient of $F$ with respect to the first entry.
\end{proof}

Now, we give the proof of Theorem \ref{nonexistence result}. 

\begin{proof}[Proof of Theorem \ref{nonexistence result}] Let $f(x, u)=|x|^\alpha h(u)$ and then 
$$F(x, u)= |x|^\alpha H(u), \quad \nabla_xF(\cdot, u)= \alpha |x|^\alpha H(u).$$Moreover, observe that for each $x\in \partial B$, $\nu=x$. Moreover, by \eqref{assumpt G},
$$a(|\nabla u|)|\nabla u|^2-G(|\nabla u|)= g(|\nabla u|)|\nabla u|-G(|\nabla u|) \geq \left(1-\dfrac{1}{p^{+}} \right) g(|\nabla u|)|\nabla u|\geq 0$$which is positive in a subset of $B$ of positive measure. Hence, since $\nu = x$ at $x\in \partial B$, we get
$$\int_{\partial B}(x\cdot \nu) \left[a(|\nabla u|)|\nabla u|^2- G(|\nabla u|)\right] d\sigma >0 .
$$
Hence, the Pohozaev's identity \eqref{Pohozaev indentity} and the $\Delta_2$-condition give
\begin{equation}
\begin{split}
0 & < (n+\alpha)\int_B |x|^\alpha H(u)\,dx + \int_B g(|\nabla u|)\dfrac{\nabla u}{|\nabla u|}\,dx -n\int_B G(|\nabla u|)\,dx \\ & = (n+\alpha)\int_B |x|^\alpha H(u)\,dx+ \int_B |x|^\alpha h(u)u\,dx-n\int_B G(|\nabla u|)\,dx\\ & \leq \left(\dfrac{n+\alpha}{q^{-}}+1\right)\int_B |x|^\alpha h(u)u\,dx-n\int_B G(|\nabla u|)\,dx \\ & \leq \left(\dfrac{n+\alpha}{q^{-}}+1\right)\int_B |x|^\alpha h(u)u\,dx-\dfrac{n}{p^{+}} \int_B g(|\nabla u|)|\nabla u|^2\,dx \\ & \leq \left(\dfrac{n+\alpha}{q^{-}}+1 -\dfrac{n}{p^{+}}\right)\int_B |x|^\alpha h(u)u\,dx.
\end{split}
\end{equation}Therefore, we arrive at a contradiction under assumption $q^-\geq (p^+)_\alpha^*$. 
\end{proof}
\appendix
\section{Mountain pass lemma}\label{sec.app}

In this section, we recall the Mountain Pass Lemma of Ambrosetti-Rabinowitz \cite{AR}. We start with the Palais-Smale condition.

\begin{definition} \label{def1}
We say that the functional $J$ satisfies the \emph{Palais-Smale compactness condition} if each sequence $\{u_k\}_{k\in\N}\subset X_{rad}(B)$ such that
\begin{itemize}
\item[(i)] $\{J(u_k)\}_{k\in\N}$ is bounded, and
\item[(ii)] $J'(u_k)\to 0$ in $X_{rad}(B)$
\end{itemize}
is precompact in $X_{rad}(B)$.
\end{definition}

We state the \emph{mountain-pass theorem} due to Ambrossetti  and Rabinowitz \cite{AR}.
\begin{theorem} \label{mountain}
Let $E$ be a Banach space and let $J\in C^1(E,\R)$ satisfy the Palais-Smale condition. Suppose that
\begin{itemize}
\item[(i)] $J(0)=0$ and $J(e)=0$ for some $e\neq 0$ in $E$;
\item[(ii)] there exists $\rho\in (0,\|e\|)$, $\sigma>0$ such that $J\geq \sigma$ in $S_\rho=\{u\in E \colon \|u\|=\rho\}$.
\end{itemize}
Then $J$ has a positive critical value
$$
c=\inf_{h\in \Gamma} \max_{t\in [0,1]} J(h(t)) \geq \sigma>0
$$
where $\Gamma=\{h\in C([0,1],E)\colon h(0)=0, h(1)=e\}$.
\end{theorem}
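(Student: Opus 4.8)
The plan is to prove Theorem~\ref{mountain} by the classical minimax argument of Ambrosetti--Rabinowitz. Set
$$
c:=\inf_{h\in\Gamma}\max_{t\in[0,1]}J(h(t)).
$$
First I would check that $c\ge\sigma$. For any $h\in\Gamma$ the function $t\mapsto\|h(t)\|$ is continuous with $\|h(0)\|=0<\rho$ and $\|h(1)\|=\|e\|>\rho$, so by the intermediate value theorem there is $t_h\in(0,1)$ with $h(t_h)\in S_\rho$; hence condition (ii) gives $\max_{t\in[0,1]}J(h(t))\ge J(h(t_h))\ge\sigma$, and taking the infimum over $h\in\Gamma$ yields $c\ge\sigma>0$.

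Next I would argue by contradiction that $c$ is a critical value of $J$. Suppose $J$ has no critical point at level $c$. Since $c\ge\sigma>0$ we may fix $\bar\varepsilon\in(0,c)$, so that $0\notin[c-\bar\varepsilon,c+\bar\varepsilon]$. Because $J\in C^1(E,\R)$ satisfies the Palais--Smale condition and the set of critical points at level $c$ is empty, the deformation lemma provides $\varepsilon\in(0,\bar\varepsilon)$ and a continuous map $\eta\in C([0,1]\times E,E)$ such that $\eta(0,u)=u$ for every $u\in E$, $\eta(t,u)=u$ whenever $J(u)\notin(c-\bar\varepsilon,c+\bar\varepsilon)$, and $J(\eta(1,u))\le c-\varepsilon$ whenever $J(u)\le c+\varepsilon$. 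By definition of $c$ there is $h\in\Gamma$ with $\max_{t\in[0,1]}J(h(t))\le c+\varepsilon$. Since $J(h(0))=J(0)=0$ and $J(h(1))=J(e)=0$ both lie outside $(c-\bar\varepsilon,c+\bar\varepsilon)$, the map $\eta(1,\cdot)$ fixes the endpoints of $h$, so $\tilde h:=\eta(1,h(\cdot))$ again belongs to $\Gamma$; but then $\max_{t\in[0,1]}J(\tilde h(t))\le c-\varepsilon<c$, contradicting the definition of $c$. Therefore $c$ is a critical value of $J$, and since $c\ge\sigma>0=J(0)$ the associated critical point is nontrivial, which proves the theorem.

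The step I expect to be the main obstacle is the deformation lemma itself. On a general Banach space $J'(u)$ lies in $E^*$ and cannot serve directly as a descent direction, so one first builds a locally Lipschitz pseudo-gradient vector field for $J$ on the set of regular points; the Palais--Smale condition is exactly what guarantees a uniform lower bound $\|J'(u)\|_{E^*}\ge\delta>0$ on a strip $\{\,|J(u)-c|\le\bar\varepsilon\,\}$ (after shrinking $\bar\varepsilon$ if necessary) and that the corresponding flow, cut off away from that strip and suitably normalized, is globally defined on $[0,1]\times E$ and pushes the sublevel set $\{J\le c+\varepsilon\}$ into $\{J\le c-\varepsilon\}$ in unit time while leaving fixed the points where $J$ is far from $c$. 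Alternatively, one may simply invoke this lemma from \cite{AR} or a standard reference on critical point theory, in which case only the two minimax steps above need to be carried out.
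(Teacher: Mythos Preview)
Your argument is the standard Ambrosetti--Rabinowitz minimax proof and is correct. Note, however, that the paper does not actually prove Theorem~\ref{mountain}: it is merely \emph{stated} in the appendix as a classical result, with attribution to \cite{AR}, and then invoked as a black box in the proof of Theorem~\ref{teo1}. So there is no ``paper's own proof'' to compare against; you have supplied what the authors chose to quote from the literature.
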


%

\end{document}